\DeclareMathOperator{\supp}{supp}
\newtheorem{theorem}{Theorem}
\newtheorem{lemma}{Lemma}
\newtheorem{corollary}{Corollary}
\newtheorem{remark}{Remark}
\newcommand{\satop}[2]{\stackrel{\scriptstyle{#1}}{\scriptstyle{#2}}}
\newcommand{\bsgamma}{\boldsymbol{\gamma}}
\newcommand{\bsk}{\boldsymbol{k}}
\newcommand{\bsx}{\boldsymbol{x}}
\newcommand{\bsh}{\boldsymbol{h}}
\newcommand{\bsg}{\boldsymbol{g}}
\newcommand{\bsm}{\boldsymbol{m}}
\newcommand{\bsw}{\boldsymbol{w}}
\newcommand{\uu}{\mathfrak{u}}
\newcommand{\vv}{\mathfrak{v}}
\newcommand{\bsy}{\boldsymbol{y}}
\newcommand{\sob}{{\rm sob}}
\newcommand{\kor}{{\rm kor}}
\newcommand{\bszero}{\boldsymbol{0}}
\newcommand{\rd}{\,\mathrm{d}}
\newcommand{\NN}{\mathbb{N}}
\newcommand{\ZZ}{\mathbb{Z}}
\newcommand{\RR}{\mathbb{R}}
\renewcommand{\pmod}[1]{\,(\bmod\,#1)}
\newcommand{\E}{\mathrm{e}}
\newcommand{\I}{\mathrm{i}}
\newcommand{\sym}{\operatorname{sym}}
\newcommand{\pmu}{\operatorname{(\pm)_{\uu}}} 
\newcommand{\Cpp}{C++}
\begin{document}

\title{Lattice rules for nonperiodic smooth integrands
}

\author{
Josef Dick, Dirk Nuyens,
Friedrich Pillichshammer
}

\date{\today}
\maketitle

\begin{abstract}
The aim of this paper is to show that one can achieve convergence
rates of $N^{-\alpha+ \delta}$ for $\alpha > 1/2$ (and for $\delta > 0$
arbitrarily small) for nonperiodic $\alpha$-smooth cosine series using lattice
rules without random shifting. The smoothness of the functions can be measured by the
decay rate of the cosine coefficients. For a specific choice of the parameters the cosine series space coincides with the unanchored Sobolev space of smoothness $1$.

We study the embeddings of various reproducing kernel Hilbert spaces and numerical integration in the cosine series function space and show that by applying the so-called tent transformation to a lattice rule one can achieve the (almost) optimal rate of
convergence of the integration error. The same holds true for symmetrized lattice rules for the tensor product of the direct sum of the Korobov space and cosine series space, but with a stronger dependence on the dimension in this case.
\end{abstract}

\section{Introduction}

Quasi-Monte Carlo (QMC) rules are equal weight quadrature rules
$$
  \frac{1}{N} \sum_{n=0}^{N-1} f(\bsx_n)
$$
which can be used to approximate integrals of the form
$$
  \int_{[0,1]^s} f(\bsx) \rd \bsx,
$$
see \cite{DP,niesiam,SJ} for more information. In QMC rules, the quadrature points $\{\bsx_0,\bsx_1,\ldots, \bsx_{N-1}\}$ are chosen according to some deterministic algorithm. One can show that the convergence rate of the integration error $\frac{1}{N} \sum_{n=0}^{N-1} f(\bsx_n) - \int_{[0,1]^s}
f(\bsx) \rd \bsx$ depends on the smoothness of the integrand
and some property of the quadrature points.

There are several deterministic construction methods for the quadrature points. One such method yields so-called digital nets. These yield a convergence of the integration error of $N^{-1} (\log N)^s$ for functions of bounded variation \cite{DP,niesiam}.
Higher order digital nets, using an interlacing factor of $d$, yield a
convergence rate of order $N^{-\min(\alpha, d)} (\log N)^{s \alpha}$
for integrands with square integrable partial mixed derivatives of
order $\alpha$ in each variable \cite{D07,D08}.

An alternative to digital nets are lattice rules \cite{niesiam,SJ}. In this case, for a given positive integer $N$ one chooses a vector $\bsg \in \{1,\ldots, N-1\}^s$
called the generating vector and defines the quadrature points by
\begin{equation*}
\left\{\frac{n \bsg }{N}\right\} \quad \text{for } 0 \le n < N.
\end{equation*}
Here, for a real number $x$, the braces $\{ x\}$ denote the fractional part,
i.e., modulo~$1$. For vectors, the fractional part is taken component wise.
It is well known that there are generating vectors for lattice rules
for which the integration error converges with order $N^{-\alpha +
\delta}$ ($\delta > 0$) for smoothness $\alpha \ge 1$, but with the
restriction that $f$ and its partial derivatives up to order $\alpha-1$ in each variable have to be periodic \cite[Theorem~18, p.\ 120]{kor}
and \cite{SKJ,SR}. Fast computer search algorithms for
such vectors are known from \cite{NC06,NC06b}. Hence, in order to be
able to benefit from the fast rate of convergence, one needs to
apply a transformation which makes the integrand (and its partial derivatives) periodic. This can cause some problems though and  is not always recommended
\cite{kor,KSW07}. Since arbitrarily high rates of convergence can be
obtained using digital nets for nonperiodic functions,
the question arises whether this is
also possible for lattice rules. Until now, lattice rules achieve a
convergence rate of at most $N^{-1 + \delta}$ ($\delta >
0$) for nonperiodic integrands (via estimates of the
star-discrepancy \cite{niesiam}). If one applies the so-called
tent transformation and a random shift, this rate of
convergence can be improved to $N^{-2 +\delta}$ for any
$\delta > 0$ for the worst-case error in an unanchored Sobolev space of smoothness $2$, see \cite{H02}.

In this paper we present quadrature rules which achieve a convergence rate of order $N^{-\alpha +
\delta}$, $\delta > 0$, for nonperiodic functions with smoothness
$\alpha > 1/2$. The way we measure smoothness in this paper is slightly
different from the setting used in, for instance, higher order digital
nets \cite{D07,D08} though. We consider functions $f$ which can be represented by a
cosine series.
Note that every continuous function $f \in
L_2([0,1])$ can be represented by a cosine series (see \cite[Theorem~1]{IN2008} for this basis over $[-1,1]$).
This follows from the fact that the functions $$1, \sqrt{2} \cos( \pi x), \sqrt{2} \cos( \pi 2 x), \sqrt{2} \cos( \pi 3 x), \ldots$$ are $L_2$-orthogonal and complete.

As mentioned above, the ``smoothness'' of the cosine series in our context is measured by
the rate of decay of the cosine coefficients. To illustrate this,
consider a one-dimensional function $f:[0,1] \to \RR$ given by
its cosine series
\begin{eqnarray*}
  f(x)
  &=\hphantom{:}&
  \widetilde{f}_{\cos}(0)
  + \sum_{k=1}^\infty \widetilde{f}_{\cos}(k) \sqrt{2} \cos(\pi k x)
  \\
  &=\hphantom{:}&
  \widetilde{f}(0)
  + \sum_{k=1}^\infty \widetilde{f}_{\cos}(2k) \sqrt{2} \cos(\pi 2k x)
  + \sum_{k=1}^\infty \widetilde{f}_{\cos}(2k-1) \sqrt{2} \cos(\pi (2k-1) x)
  \\
  &=:&
  c + f_{\mathrm{per}}(x) + f_{\mathrm{nonper}}(x),
\end{eqnarray*}
where
\begin{align}\label{eq:that}
  \widetilde{f}_{\cos}(k) :=
  \begin{cases}
    \displaystyle \int_0^1 f(x) \rd x & \text{ for } k=0,\\[4mm]
    \displaystyle \int_0^1 f(x) \sqrt{2} \cos(\pi k x) \rd x & \text{ for } k\in \NN.
  \end{cases}
\end{align}
The sum over the even frequencies is a $1$-periodic function
$f_{\mathrm{per}}$ over $[0,1]$. If the coefficients $\widetilde{f}(h)$ decay with
order $h^{-\alpha}$ then $f_{\mathrm{per}}$ is $\alpha$-times
differentiable in the classical sense. However, this does not apply
to the sum over the odd coefficients. For instance, the cosine series for $x \mapsto x - \tfrac12$ is given by
\begin{align}\label{eq:x-expansion}
  -\frac{4}{\pi^2} \sum_{\substack{k=1\\[1mm] k \text{ odd}}}^{\infty} \frac{1}{k^2} \cos(\pi k x)
\end{align}
and hence the odd
coefficients converge with order $k^{-2}$ only, although $x$ is infinitely times differentiable.

Below we introduce a reproducing kernel Hilbert space based on cosine series, with the smoothness measured by the decay rate of the cosine coefficients. Although the smoothness of a cosine series measured by the differentiability of the series can be larger than the decay rate of the cosine coefficients suggests, the opposite can not happen. That is, we show that the reproducing kernel Hilbert space based on cosine series is embedded in the unanchored Sobolev space with the same value of the smoothness parameter. The case of smoothness $1$ provides an exception, since there the cosine series space and the unanchored Sobolev space coincide. Various reproducing kernel Hilbert spaces are introduced in Section~\ref{sec_rkhs} and their embeddings are studied in Section~\ref{sec_embeddings}.

In this paper we present two methods which allow us to achieve a higher convergence rate for smoother nonperiodic functions using lattice rules, namely:
\begin{enumerate}
 \item application of the tent transformation to the integration nodes;
 \item symmetrization of the integration nodes.
\end{enumerate}

\noindent
The {\it tent transformation}, $\phi : [0,1] \rightarrow [0,1]$,
$$
   \phi(x) := 1-|2x-1|
$$
is a Lebesgue measure preserving function. The idea of using this transformation in conjunction with a random shift for integration based on lattice rules comes from Hickernell \cite{H02} and was also used in \cite{CDPL} for digital nets. In contrast to these works, here we do not rely on a random element in our quadrature rules. We show that a tent transformed lattice rule achieves an integration error of order $N^{-\alpha + \delta}$, for any $\delta > 0$, for functions belonging to a certain reproducing kernel Hilbert space of cosine series with smoothness parameter $\alpha$. This result follows by showing that the worst-case error in the cosine series space for a tent-transformed lattice rule is the same as the worst-case error in a Korobov space of smooth periodic functions using lattice rules. Thus all the results for integration in Korobov spaces using lattice rules \cite{D04,DSWW,kuo,NC06,NC06b} also apply for integration in the cosine series space using tent-transformed lattice rules. In particular for smoothness $1$, this yields deterministic point sets for numerical integration in unanchored Sobolev spaces with the same tractability properties as for numerical integration in the Korobov space.

Furthermore, we also use symmetrized lattice rules. We show that these rules achieve the
optimal order of convergence for integration of sums of products of cosine series and Fourier series. We apply the
transformation $x \mapsto 1-x$ to each possible set of coordinates separately, so
that if we start off with $N$ points we get $O(2^{s-1} N)$ points (see Section~\ref{sec:symm}).
This symmetrization approach is also mentioned in \cite{kor,SJ,Zar72a} and
is one of the symmetry groups applied in the construction of cubature formulae, see, e.g., \cite{CH94,GM83}.
We prove that a lattice rule symmetrized this way achieves an
integration error of order $N^{-\alpha+\delta}$,
$\delta > 0$, for functions belonging to a certain reproducing kernel Hilbert space of cosine series and Fourier series with smoothness parameter $\alpha$. The advantage of symmetrized lattice rules is that functions of the form $\cos (\pi (2k-1) x)$, where $k$ is a nonnegative integer, are integrated exactly and hence only the smoothness of the periodic part determines the convergence rate. Note that the decay rate of the cosine series coefficients of the periodic part and of the Fourier series coefficients part coincides with the classical smoothness. Thus the problem with functions where the smoothness in terms of differentiability differs from the rate of decay of the cosine series is overcome using symmetrization. For instance, the function $x \mapsto x$ is integrated exactly using symmetrization. However, a disadvantage of the symmetrization compared to the tent transformation is that the number of function evaluations grows exponential in the dimension and therefore symmetrization is only useful in smaller dimensions.

For both methods, the rates of convergence we obtain are essentially optimal by an adaption of the lower bound of Bakhvalov \cite{bakh}, which is presented in Section~\ref{sec_lower_bound}.

In the next section we introduce four reproducing kernel Hilbert spaces, the unanchored Sobolev space, the Korobov space, the cosine series space and the sum of the cosine and Korobov space. Since the unanchored Sobolev space and the Korobov space are frequently studied in the literature, we study the relations among these four spaces in Section~\ref{sec_embeddings} to put our results into context. It is shown that the Korobov space and the cosine series space differ, but both are embedded in the sum of the cosine series and Korobov space, which is itself embedded in the unanchored Sobolev space. In Section~\ref{sec_num_int} we study numerical integration in the cosine series space using tent-transformed lattice rules and numerical integration in the sum of the cosine series and Korobov space using symmetrized lattice rules. Numerical results are presented in Section~\ref{sec_num_res} and a conclusion is presented in Section~\ref{sec_conclusion}.

We write $\mathbb{Z}$ for the set of integers, $\mathbb{N} := \{1, 2, \ldots\}$ for the set of positive integers and $\mathbb{N}_0 := \{0, 1, 2, \ldots\}$ for the set of nonnegative integers.
We also write $\mathbb{R}_+^s := \{x \in \mathbb{R}: x > 0\}^s$. Furthermore, for $s \in \NN$ we write $[s] := \{1,\ldots,s\}$.

\section{Reproducing kernel Hilbert spaces}\label{sec_rkhs}

In this section we introduce several reproducing kernel Hilbert spaces \cite{A50}. For a reproducing kernel $K:[0,1] \times [0,1]\to\mathbb{R}$ we denote by $\mathcal{H}(K)$ the corresponding reproducing kernel Hilbert space with inner product $\langle \cdot, \cdot \rangle_K$ and corresponding norm $\|f\|_K = \sqrt{\langle f, f \rangle_K}$. For any $y \in [0,1]$ we have $K(\cdot, y) \in \mathcal{H}(K)$ and we have the reproducing property
\begin{equation*}
f(y) = \langle f, K(\cdot, y)\rangle \quad \mbox{for all } y \in [0,1] \mbox{ and } f \in \mathcal{H}(K).
\end{equation*}
Further, the function $K$ is symmetric in its arguments and positive semi-definite.

For higher dimensions $s > 1$ we consider tensor product spaces. The reproducing kernel is in this case given by
\begin{equation*}
K_s(\bsx,\bsy) = \prod_{j=1}^s K(x_j,y_j),
\end{equation*}
where $\bsx=(x_1,\ldots,x_s)$ and $\bsy=(y_1,\ldots,y_s)$.
Again, the corresponding reproducing kernel Hilbert space is denoted by $\mathcal{H}(K_s)$, the corresponding inner product is denoted by $\langle \cdot, \cdot \rangle_{K_s}$ and the corresponding norm by $\|f\|_{K_s} = \sqrt{\langle f, f \rangle_{K_s}}$.

For further information on reproducing kernel Hilbert spaces we refer to \cite{A50} (or \cite[Chapter~2]{DP} for reproducing kernel Hilbert spaces in the context of numerical integration).

\subsection{The unanchored Sobolev space}

The {\it unanchored Sobolev space} is a reproducing kernel Hilbert space $\mathcal{H}(K^{\sob}_{1,\gamma})$ with reproducing kernel $K^{\sob}_{1,\gamma}:[0,1]\times [0,1] \to \mathbb{R}$ given by
\begin{equation*}
K^{\sob}_{1,\gamma}(x,y) := 1 + \gamma B_1(x) B_1(y) + \gamma
\frac{B_2(|x-y|)}{2},
\end{equation*}
where $B_1(z) = z-1/2$ and $B_2(z) = z^2-z+1/6$ are Bernoulli
polynomials and $\gamma > 0$ is a real number. The inner product in this space
is given by
\begin{equation*}
\langle f, g \rangle_{K^{\sob}_{1,\gamma}} = \int_0^1 f(x)
\,\mathrm{d} x \int_0^1 g(x) \,\mathrm{d} x + \frac{1}{\gamma} \int_{0}^1
f'(x) g'(x) \,\mathrm{d} x.
\end{equation*}
In higher dimensions $s>1$ we consider the tensor product space
$\mathcal{H}(K^{\sob}_{1,\gamma_1}) \otimes \cdots \otimes
\mathcal{H}(K^{\sob}_{1,\gamma_s})$. The reproducing kernel is
in this case given by
\begin{equation*}
K^{\sob}_{1,\bsgamma,s}(\bsx,\bsy) := \prod_{j=1}^s
K^{\sob}_{1,\gamma_j}(x_j,y_j),
\end{equation*}
where $\bsx = (x_1,\ldots, x_s)$, $\bsy = (y_1,\ldots, y_s)$ and
$\bsgamma = (\gamma_1,\ldots, \gamma_s)\in \mathbb{R}_+^s$.

The reproducing kernel Hilbert space
$\mathcal{H}(K^{\sob}_{1,\gamma})$ can be generalized to higher
order smoothness. For $\alpha \in \mathbb{N}$, consider the
reproducing kernel $K^{\sob}_{\alpha,\gamma}:[0,1]\times [0,1] \to
\mathbb{R}$ given by
\begin{equation*}
K^{\sob}_{\alpha,\gamma}(x,y) := 1 + \gamma \sum_{\tau=1}^\alpha \frac{B_\tau(x) B_\tau(y)}{(\tau!)^2} - (-1)^\alpha \gamma \frac{B_{2\alpha}(|x-y|)}{(2\alpha)!},
\end{equation*}
where $B_\tau$ is the Bernoulli polynomial of order $\tau$. The inner product for this space is given by
\begin{multline*}
\langle f, g \rangle_{K^{\sob}_{\alpha,\gamma}} = \int_0^1 f(x) \,\mathrm{d} x \int_0^1 g(x) \,\mathrm{d} x \\*
+ \frac{1}{\gamma} \sum_{\tau=1}^{\alpha-1} \int_0^1 f^{(\tau)}(x)\,\mathrm{d} x \int_0^1 g^{(\tau)}(x) \,\mathrm{d} x
+ \frac{1}{\gamma} \int_0^1 f^{(\alpha)}(x) \, g^{(\alpha)}(x) \,\mathrm{d} x.
\end{multline*}
To obtain reproducing kernel Hilbert spaces for the domain
$[0,1]^s$, we consider again the tensor product of the
one-dimensional spaces. This space has the reproducing kernel
\begin{equation*}
K^{\sob}_{\alpha,\bsgamma,s}(\bsx,\bsy) := \prod_{j=1}^s
K^{\sob}_{\alpha,\gamma_j}(x_j,y_j).
\end{equation*}

Quasi-Monte Carlo rules in $\mathcal{H}(K^{\sob}_{\alpha,\bsgamma,s})$ which yield the optimal rate of convergence of order $N^{-\alpha+\delta}$ for any $\delta > 0$ where studied in \cite{D08}.

\subsection{The Korobov space}

For $\alpha > 1/2$, $h \in \mathbb{Z}$ and $\gamma > 0$ we define
\begin{equation}\label{defr}
r_{\alpha,\gamma}(h) := \left\{\begin{array}{ll} 1 & \mbox{if } h =
0, \\ \gamma |h|^{-2\alpha} & \mbox{if } h \neq 0. \end{array} \right.
\end{equation}
For $\bsh = (h_1,\ldots, h_s) \in \mathbb{Z}^s$ and $\bsgamma = (\gamma_1,\ldots,\gamma_s) \in \RR_+^s$, we set
\begin{equation*}
r_{\alpha,\bsgamma,s}(\bsh) := \prod_{j = 1}^s r_{\alpha,\gamma_j}(h_j).
\end{equation*}

The {\it Korobov space} is a reproducing kernel Hilbert space of Fourier
series. The reproducing kernel for this space is given by
\begin{equation*}
K^{\kor}_{\alpha,\gamma}(x,y) := \sum_{h \in \mathbb{Z}} r_{\alpha,\gamma}(h) \, \mathrm{e}^{2\pi \mathrm{i} h (x-y)},
\end{equation*}
and in higher dimensions $s>1$ by
\begin{equation*}
K^{\kor}_{\alpha,\bsgamma,s}(\bsx,\bsy) := \prod_{j=1}^s K^{\kor}_{\alpha,\gamma_j}(x_j,y_j) = \sum_{\bsh \in
\mathbb{Z}^s} r_{\alpha,\bsgamma,s}(\bsh) \, \mathrm{e}^{2\pi
\mathrm{i} \bsh\cdot (\bsx-\bsy)},
\end{equation*}
where the ``$\cdot$'' denotes the usual inner product in $\RR^s$.
Let the Fourier coefficient for a function $f:[0,1]^s \to
\mathbb{R}$ be given by
\begin{equation*}
\widehat{f}(\bsh) := \int_{[0,1]^s} f(\bsx) \, \mathrm{e}^{-2\pi
\mathrm{i} \bsh \cdot \bsx} \,\mathrm{d} \bsx.
\end{equation*}
Then the inner product in the reproducing kernel Hilbert space
$\mathcal{H}(K^{\kor}_{\alpha,\bsgamma,s})$ is given by
\begin{equation*}
\langle f, g \rangle_{K^{\kor}_{\alpha,\bsgamma,s}} = \sum_{\bsh \in
\mathbb{Z}^s} \widehat{f}(\bsh) \, \overline{\widehat{g}(\bsh)} \,
r^{-1}_{\alpha,\bsgamma,s}(\bsh).
\end{equation*}
The corresponding norm is defined by
$\|f\|_{K^{\kor}_{\alpha,\bsgamma,s}} = \sqrt{\langle f, f
\rangle_{K^{\kor}_{\alpha, \bsgamma,s}}}$.

\subsection{The half-period cosine space}

The {\it half-period cosine space} is a reproducing kernel Hilbert space of (half-period) cosine series with reproducing kernel
\begin{equation*}
K^{\cos}_{\alpha,\gamma}(x,y) := 1+\sum_{k=1}^\infty
r_{\alpha,\gamma}(k) \sqrt{2} \cos(\pi k x) \sqrt{2} \cos(\pi k y),
\end{equation*}
where $r_{\alpha,\gamma}$ is defined as in \eqref{defr} and where $\alpha > 1/2$ and $\gamma > 0$.
The inner product is given by
\begin{equation*}
\langle f, g \rangle_{K^{\cos}_{\alpha,\gamma}} = \sum_{k=0}^\infty
\widetilde{f}_{\cos}(k) \, \widetilde{g}_{\cos}(k) \, r^{-1}_{\alpha,\gamma}(k),
\end{equation*}
where $\widetilde{f}_{\cos}$ and $\widetilde{g}_{\cos}$ are the cosine coefficients of $f$ and $g$, respectively, as defined in \eqref{eq:that}.

We can generalize the reproducing kernel Hilbert space
$\mathcal{H}(K^{\cos}_{\alpha,\gamma})$ to the domain $[0,1]^s$ by
setting
\begin{equation*}
K^{\cos}_{\alpha,\bsgamma,s}(\bsx,\bsy) := \prod_{j=1}^s
K^{\cos}_{\alpha,\gamma_j}(x_j,y_j),
\end{equation*}
where $\bsgamma = (\gamma_1,\ldots, \gamma_s) \in \mathbb{R}_+^s$ and $\bsx = (x_1,\ldots, x_s),
\bsy=(y_1,\ldots, y_s) \in [0,1]^s$. The inner product is then given
by
\begin{equation*}
\langle f, g \rangle_{K^{\cos}_{\alpha,\bsgamma,s}} = \sum_{\bsk \in
\mathbb{N}_0^s} \widetilde{f}_{\cos}(\bsk) \, \widetilde{g}_{\cos}(\bsk) \,
r_{\alpha,\bsgamma,s}^{-1}(\bsk),
\end{equation*}
where the multi-dimensional cosine coefficients for a function $f:[0,1]^s \to\mathbb{R}$ are given by
\begin{equation*}
\widetilde{f}_{\cos}(\bsk) := \int_{[0,1]^s} f(\bsx) \, 2^{|\bsk|_0/2} \prod_{j=1}^s \cos(\pi k_j x_j) \,\mathrm{d} \bsx,
\end{equation*}
where for $\bsk=(k_1,\ldots,k_s) \in \NN_0^s$ we define $|\bsk|_0 := |\{j \in [s]\, : \, k_j \not=0\}|$ to be the number of nonzero components in $\bsk$.
The corresponding norm is defined by
$\|f\|_{K^{\cos}_{\alpha,\bsgamma,s}} = \sqrt{\langle f, f\rangle_{K^{\cos}_{\alpha,\bsgamma,s}}}$, in particular we have
\begin{align*}
  \|f\|_{K^{\cos}_{\alpha,\bsgamma,s}}^2
  &=
  \sum_{\bsk \in \mathbb{N}_0^s} \frac{|\widetilde{f}_{\cos}(\bsk)|^2}{r_{\alpha,\bsgamma,s}(\bsk)}
  =
  \sum_{\bsh \in \mathbb{Z}^s} 2^{-|\bsh|_0} \frac{|\widetilde{f}_{\cos}(|\bsh|)|^2}{r_{\alpha,\bsgamma,s}(\bsh)}
  ,
\end{align*}
where $|\bsh| = (|h_1|, \ldots, |h_d|)$.

\subsection{The sum of the Korobov space and the half-period cosine space}

In this section we introduce the kernel
\begin{eqnarray*}
K^{\kor + \cos}_{\alpha,\gamma}(x,y)
& := &
\frac{1}{2} \left( K^{\kor}_{\alpha,\gamma}(x,y) + K^{\cos}_{\alpha,\gamma}(x,y) \right) \\
& \hphantom{:}= &
\frac{1}{2} \sum_{h\in \mathbb{Z}} r_{\alpha,\gamma}(h) \, \mathrm{e}^{2\pi \mathrm{i} h (x-y) }
+ \frac{1}{2} + \sum_{k=1}^\infty r_{\alpha,\gamma}(k) \cos(\pi k x) \cos(\pi k y) \\
& \hphantom{:}= &
1 + \gamma \sum_{k=1}^\infty k^{-2\alpha} \left( \cos(2\pi k (x-y)) + \cos(\pi k x) \cos(\pi k y) \right).
\end{eqnarray*}
The space resulting from the sum of kernels is studied in \cite[Part~I, Section~6]{A50}.
The norm in the reproducing kernel Hilbert space $\mathcal{H}(K^{\kor + \cos}_{\alpha,\gamma})$ is then defined by
\begin{equation*}
\|f\|^2_{K^{\kor+\cos}_{\alpha,\gamma}} =
\min_{f = f_{\kor} + f_{\cos}} 2 \left( \|f_{\kor}\|^2_{K^{\kor}_{\alpha,\gamma}} + \|f_{\cos}\|^2_{K^{\cos}_{\alpha,\gamma}} \right),
\end{equation*}
where the minimum is taken over all functions $f_{\kor} \in \mathcal{H}(K^{\kor}_{\alpha,\gamma})$ and $f_{\cos} \in \mathcal{H}(K^{\cos}_{\alpha,\gamma})$ such that $f = f_{\kor} + f_{\cos}$.

For dimensions $s > 1$ we define the reproducing kernel by
\begin{equation*}
K^{\kor + \cos}_{\alpha,\bsgamma,s}(\bsx,\bsy) := \prod_{j=1}^s \frac{1}{2} \left(K^{\kor}_{\alpha,\gamma_j}(x_j,y_j) + K^{\cos}_{\alpha,\gamma_j}(x_j,y_j) \right).
\end{equation*}
Thus the space $\mathcal{H}(K^{\kor+ \cos}_{\alpha,\bsgamma,s})$ is the tensor product $\mathcal{H}(K^{\kor+\cos}_{\alpha,\gamma_1}) \otimes \cdots \otimes \mathcal{H}(K^{\kor+\cos}_{\alpha,\gamma_s})$. For $\uu \subseteq [s]$ we define
\begin{equation*}
K^{\kor + \cos}_{\alpha,\bsgamma,s,\uu}(\bsx,\bsy) := 2^{-s} \prod_{j \in \uu} K^{\kor}_{\alpha,\gamma_j}(x_j,y_j) \prod_{j \in [s]\setminus \uu}  K^{\cos}_{\alpha,\gamma_j}(x_j,y_j),
\end{equation*}
where as usual an empty product is considered to be one. This is a reproducing kernel for the space
$$
  \mathcal{H}(K^{\kor+\cos}_{\alpha,\bsgamma,s,\uu})
  =
  \left(\bigotimes_{j\in \uu \vphantom{[s]\setminus \uu}} \mathcal{H}(\tfrac12 K^{\kor}_{\alpha,\gamma_j}) \right)
  \otimes
  \left(\bigotimes_{j\in [s]\setminus \uu} \mathcal{H}(\tfrac12 K^{\cos}_{\alpha,\gamma_j})\right)
$$
with the inner product
$$
  \langle f,g\rangle_{K^{\kor + \cos}_{\alpha,\bsgamma,s,\uu}}
  =
  2^{s} \sum_{\bsh_{\uu}\in \ZZ^{|\uu|}} \sum_{\bsk_{[s]\setminus \uu} \in \NN_0^{s-|\uu|}}
  \widetilde{f}_{\uu,\kor+\cos}(\bsh_{\uu},\bsk_{[s]\setminus \uu}) \,
  \overline{ \widetilde{g}_{\uu,\kor+\cos}(\bsh_{\uu},\bsk_{[s]\setminus \uu})} \,
  r_{\alpha,\gamma,s}^{-1}(\bsh_{\uu},\bsk_{[s]\setminus \uu}),
$$
where
$$
  \widetilde{f}_{\uu,\kor+\cos}(\bsh_{\uu},\bsk_{[s]\setminus \uu})
  :=
  \int_{[0,1]^s} f(\bsx) \, (\sqrt{2})^{|\bsk_{[s]\setminus \uu}|_0} \prod_{j \in \uu} {\rm e}^{-2 \pi \mathrm{i} h_j x_j} \prod_{j \in [s]\setminus \uu} \cos(\pi k_j x_j)
  \, \mathrm{d}\bsx
  .
$$
Clearly we have that
$$
  K^{\kor + \cos}_{\alpha,\bsgamma,s}(\bsx,\bsy)
  =
  \sum_{\uu \subseteq [s]} K^{\kor + \cos}_{\alpha,\bsgamma,s,\uu}(\bsx,\bsy)
  .
$$

\section{Embeddings}\label{sec_embeddings}

In this section we investigate the relationships between the spaces introduced above.

For two reproducing kernel Hilbert spaces $\mathcal{H}(K_1)$ and $\mathcal{H}(K_2)$ we say that $\mathcal{H}(K_1)$ is {\it continuously embedded} in $\mathcal{H}(K_2)$ if $$\mathcal{H}(K_1) \subseteq \mathcal{H}(K_2)$$ and if $$\|f\|_{K_2} \le C \|f\|_{K_1} \quad \mbox{for all } f \in \mathcal{H}(K_1)$$ for some constant $C>0$ independent of $f$. We write $$\mathcal{H}(K_1) \hookrightarrow \mathcal{H}(K_2)$$ in this case.

On the other hand it is possible that $\mathcal{H}(K_1)$ is not a subset of $\mathcal{H}(K_2)$, i.e., there is a function in $\mathcal{H}(K_1)$ which is not in $\mathcal{H}(K_2)$. In this case we write $\mathcal{H}(K_1) \not\subset \mathcal{H}(K_2)$.

If $\mathcal{H}(K_1) \hookrightarrow \mathcal{H}(K_2)$ and $\mathcal{H}(K_2) \hookrightarrow \mathcal{H}(K_1)$ we write
\begin{equation*}
\mathcal{H}(K_1) \leftrightharpoons \mathcal{H}(K_2).
\end{equation*}

\subsection{The Korobov space and the unanchored Sobolev space}

It is well known, see, e.g., \cite[Appendix~A]{NW08}, that the Korobov space is continuously embedded in the unanchored Sobolev space
\begin{equation*}
\mathcal{H}(K^{\kor}_{\alpha,\bsgamma,s}) \hookrightarrow \mathcal{H}(K^{\sob}_{\alpha,\bsgamma,s}).
\end{equation*}
Conversely, as is also well known, for instance the function $g:[0,1]^s \rightarrow \RR$, $g(\bsx)=x_1$ is in $\mathcal{H}(K^{\sob}_{\alpha,\bsgamma,s})$ for all $\alpha \in \mathbb{N}$, but not in $\mathcal{H}(K^{\kor}_{\alpha,\bsgamma,s})$, since $g$ is not periodic. Thus $$\mathcal{H}(K^{\sob}_{\alpha,\bsgamma,s}) \not\subset \mathcal{H}(K^{\kor}_{\alpha,\bsgamma,s}).$$
We note that for $f \in \mathcal{H}(K^{\kor}_{\alpha,\bsgamma,s})$, $\alpha \in \NN$, we have that $\|f\|_{K^{\sob}_{\alpha,\bsgamma,s}} = \|f\|_{K^{\sob}_{\alpha,\bsgamma(2\pi)^{-2\alpha},s}}$ where $\bsgamma(2\pi)^{-2\alpha}$ denotes the rescaled sequence $(\gamma_1 (2\pi)^{-2\alpha}, \ldots, \gamma_s (2\pi)^{-2\alpha})$.

\subsection{The half-period cosine space and the unanchored Sobolev space}

We now consider the half-period cosine space and the unanchored Sobolev space. For $\alpha=1$ there is a peculiarity.
\begin{lemma}
  We have
  \begin{equation*}
    K^{\sob}_{1,\gamma}(x,y)
    =
    1 + \frac{\gamma}{\pi^2} \sum_{k=1}^\infty \frac{1}{k^2} 2 \cos(k \pi x) \cos(k \pi y)
    =
    K^{\cos}_{1,\gamma \pi^{-2}}(x,y).
  \end{equation*}
\end{lemma}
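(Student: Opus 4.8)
The plan is to verify the two claimed identities by computing the Fourier--cosine expansions of the constituent pieces of $K^{\sob}_{1,\gamma}(x,y)$ and matching coefficients against the definition of $K^{\cos}_{1,\gamma\pi^{-2}}$. Recall that $K^{\sob}_{1,\gamma}(x,y) = 1 + \gamma B_1(x) B_1(y) + \tfrac{\gamma}{2} B_2(|x-y|)$, so it suffices to show that
\begin{equation*}
  B_1(x) B_1(y) + \tfrac12 B_2(|x-y|)
  =
  \frac{1}{\pi^2} \sum_{k=1}^\infty \frac{2}{k^2} \cos(k\pi x)\cos(k\pi y).
\end{equation*}
The right-hand side is precisely $\sum_{k=1}^\infty r_{1,\pi^{-2}}(k)\,\sqrt2\cos(\pi k x)\,\sqrt2\cos(\pi k y)$ with $r_{1,\gamma}$ as in \eqref{defr}, so once this is established, reading off from the definition of $K^{\cos}_{\alpha,\gamma}$ gives the second equality and the first equality simultaneously.

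First I would recall the standard half-period cosine expansion of $B_1(x) = x - 1/2$ on $[0,1]$, namely \eqref{eq:x-expansion} rewritten as $B_1(x) = -\tfrac{4}{\pi^2}\sum_{k\text{ odd}} k^{-2}\cos(\pi k x)$; consequently $B_1(x)B_1(y)$ has the cosine--cosine expansion supported on pairs of odd frequencies. Next I would expand $B_2(|x-y|)/2$: here it is cleanest to use the classical full-period Fourier expansion $B_2(\{t\})/2 = \tfrac{1}{2\pi^2}\sum_{m\ge 1} m^{-2}\cos(2\pi m t)$ valid for $t\in[0,1]$, applied with $t = |x-y|$ (note $B_2$ is even and $1$-periodic, so this is legitimate for $x,y\in[0,1]$), and then apply the product-to-sum identity $\cos(2\pi m(x-y)) = \cos(2\pi m x)\cos(2\pi m y) + \sin(2\pi m x)\sin(2\pi m y)$. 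Alternatively — and this is the route I would actually take to keep things in the cosine basis from the start — I would directly compute the double cosine coefficients $\int_0^1\int_0^1 \tfrac12 B_2(|x-y|)\,2\cos(\pi j x)\cos(\pi k y)\,\mathrm dx\,\mathrm dy$ by a short integration by parts, using $B_2'(z) = 2B_1(z)$ and the jump/symmetry of $|x-y|$ across the diagonal; this reduces everything to one-dimensional integrals of Bernoulli polynomials against cosines, which are elementary.

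Combining the two pieces, the diagonal terms $j=k$ from $B_2$ contribute $\tfrac{1}{2\pi^2 k^2}$ to the coefficient of $2\cos(\pi k x)\cos(\pi k y)$ for \emph{all} $k\ge 1$, while the contribution from $B_1(x)B_1(y)$ adds $\tfrac{1}{2\pi^2 k^2}$ for the \emph{odd} $k$ (from $\tfrac{4}{\pi^2 k^2}\cdot\tfrac{4}{\pi^2 k^2}$ suitably normalized — I will check the constant carefully) and, crucially, the off-diagonal ($j\ne k$) terms must cancel between the two contributions. So the real content — and the step I expect to be the main obstacle — is bookkeeping the odd-frequency off-diagonal cross terms: showing that the off-diagonal part of the cosine expansion of $\tfrac12 B_2(|x-y|)$ is exactly $-B_1(x)B_1(y)$ restricted to its off-diagonal part, so that after adding $B_1(x)B_1(y)$ only the diagonal survives and the odd diagonal coefficients double up to match $\tfrac{1}{\pi^2 k^2}\cdot 2$ for every $k$. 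Once the coefficient of $2\cos(\pi k x)\cos(\pi k y)$ is confirmed to equal $\tfrac{\gamma}{\pi^2 k^2} = \gamma\pi^{-2}\,r_{1,1}(k)$ uniformly in $k\ge 1$ (and the constant term is $1$ on both sides), the identity with $K^{\cos}_{1,\gamma\pi^{-2}}$ follows by definition. A cleaner variant that sidesteps the cross-term cancellation entirely is to instead verify the identity at the level of inner products: show that both kernels induce the same inner product $\langle f,g\rangle = \int_0^1 f\int_0^1 g + \tfrac{1}{\gamma}\int_0^1 f'g'$, since for the cosine kernel $\int_0^1 f'g' = \sum_{k\ge1}\pi^2 k^2 \widetilde f_{\cos}(k)\widetilde g_{\cos}(k)$ by differentiating the cosine series termwise and using $L_2$-orthogonality of $\{\sqrt2\sin(\pi k x)\}$; matching this against the $r_{1,\gamma\pi^{-2}}^{-1}(k) = \gamma^{-1}\pi^2 k^2$ weights is immediate, and equality of inner products on a dense set forces equality of kernels. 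I would present the coefficient-matching argument as the primary proof and perhaps remark on the inner-product viewpoint.
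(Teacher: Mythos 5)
Your proposal is correct and takes essentially the same route as the paper: the paper's proof likewise computes the double cosine coefficients of $K^{\sob}_{1,\gamma}$ by expanding $B_1(x)B_1(y)$ via \eqref{eq:x-expansion} and $B_2(\{x-y\})$ via its full-period Fourier series, and then verifies exactly the cancellation you identify as the crux --- the odd--odd off-diagonal cross terms of $B_1(x)B_1(y)$ against the $\sin$--$\sin$ part of $B_2(|x-y|)$ --- by explicitly evaluating $\sum_{k\ge1}\bigl((4k^2-m^2)(4k^2-n^2)\bigr)^{-1}$ for odd $m,n$. Just be careful with the provisional constants in your sketch (the even-frequency diagonal comes entirely from the $\cos$--$\cos$ part of $B_2$, and the odd diagonal splits as $\tfrac{1}{\pi^2 m^2}-\tfrac{8}{\pi^4 m^4}$ from $B_2$ plus $\tfrac{8}{\pi^4 m^4}$ from $B_1(x)B_1(y)$, not an even halving), and close the argument as the paper does by noting that the resulting cosine series converges absolutely and the kernel is continuous, so the expansion holds pointwise.
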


For completeness we include a short proof.
\begin{proof}
In the following we calculate the cosine coefficients of $K^{\sob}_{1,\gamma}$. It is easy to check that we have $\int_0^1 \int_0^1 K^{\sob}_{1,\gamma}(x,y) \cos(\pi n x) \cos(\pi m y) \rd{x} \rd{y} = 0$ for $(n,m) \in \{(0,k): k>0\} \cup \{(k,0): k>0\}$. Further we have $\int_0^1 \int_0^1 K^{\sob}_{1,\gamma}(x,y) \rd{x} \rd{y} = 1$.

We have
    \begin{align*}
    B_2(|x-y|)  = & B_2(\{x-y\}) = \frac{1}{2\pi^2} \sum_{k\in \mathbb{Z}\setminus \{0\}} \frac{\mathrm{e}^{2\pi \mathrm{i} k (x-y)}}{k^2} \\  = & \sum_{k=1}^\infty \frac{\cos (2\pi k x) \cos(2\pi k y)}{\pi^2 k^2} + \sum_{k=1}^\infty \frac{\sin(2\pi kx) \sin(2\pi ky)}{\pi^2 k^2}.
    \end{align*}
Using \eqref{eq:x-expansion} we obtain
    \begin{align*}
    (x-\tfrac{1}{2})(y-\tfrac{1}{2}) + \frac{B_2(|x-y|)}{2} = & \sum_{k,l=1}^\infty \frac{16}{\pi^4 (2k-1)^2(2l-1)^2} \cos(\pi(2k-1)x) \cos(\pi(2l-1)y) \\ & + \sum_{k=1}^\infty \frac{\cos (2\pi k x) \cos(2\pi k y)}{2\pi^2 k^2} + \sum_{k=1}^\infty \frac{\sin(2\pi kx) \sin(2\pi ky)}{2\pi^2 k^2}.
    \end{align*}
This immediately implies that
\begin{equation*}
\int_0^1 \int_0^1 K^{\sob}_{1,\gamma} \sqrt{2} \cos(\pi m x) \cos(\pi n y) \rd{x} \rd{y} = 0
\end{equation*}
if $m$ is even and $n$ is odd, or $m$ is odd and $n$ is even, or $m,n$ are even with $m \neq n$. If $m=n=k$ for even $k > 0$, we obtain
  \begin{equation*}
\int_0^1 \int_0^1  K^{\sob}_{1,\gamma}(x,y) \sqrt{2} \cos(\pi k x) \sqrt{2}\cos(\pi k y) \rd{x} \rd{y} = \frac{\gamma}{\pi^2 k^2}.
  \end{equation*}

Now let $m,n > 0$ be odd. We have $\int_0^1 \sin(2\pi k x) \cos(\pi m x) \rd{x} = \frac{2k(1-(-1)^m)}{\pi(4k^2-m^2)}$ and therefore
\begin{align*}
\int_0^1 \int_0^1 K^{\sob}_{1,\gamma}(x,y) \sqrt{2} \cos(\pi m x) \sqrt{2} \cos(\pi n y) \rd{x} \rd{y} = & \frac{8}{\pi^4 m^2 n^2} + \frac{16}{\pi^4} \sum_{k=1}^\infty \frac{1}{(4k^2-m^2)(4k^2-n^2)}.
\end{align*}
For $m \neq n$ we have $\sum_{k=1}^\infty \frac{1}{(4k^2-m^2)(4k^2-n^2)} = -\frac{1}{m^2n^2}$ and further $\sum_{k=1}^\infty \frac{1}{(4k^2-m^2)^2} = \frac{\pi^2 m^2-8}{16 m^4}$. Thus we obtain
\begin{align*}
\int_0^1 \int_0^1 K^{\sob}_{1,\gamma}(x,y) \sqrt{2} \cos(\pi m x) \sqrt{2} \cos(\pi n y) \rd{x} \rd{y}  = \left\{\begin{array}{rl} 0 & \mbox{if } m \neq n, \\ \frac{\gamma}{(\pi m)^{2}}  & \mbox{if } m = n. \end{array} \right.
\end{align*}
Note that the cosine series for $K^{\sob}_{1,\gamma}$ converges absolutely. Since the function $K^{\sob}_{1,\gamma}$ is continuous, the cosine series converges to the function pointwise. This completes the proof.
\end{proof}

The above lemma and Mercer's theorem also yield the eigenfunctions of the operator $$T(g)(y) = \int_0^1 K^{\sob}_{1,\gamma}(x,y) g(x) \rd{x}.$$ These are $1, \sqrt{2} \cos(\pi x), \sqrt{2} \cos(2\pi x), \sqrt{2} \cos(3\pi x),\ldots$ and the corresponding eigenvalues are $1, \pi^{-2}, (\pi 2)^{-2}, (\pi 3)^{-2}, \ldots$. 

\begin{remark}
An analoguous result for a slightly different reproducing kernel Hilbert space was established in \cite{WW09}. For this space one obtains the same set of eigenfunctions. For the anchored Sobolev space the eigenfunctions are slightly different and have been found in \cite{WW99}.
\end{remark}

We thus find
\begin{equation*}
\mathcal{H}(K^{\sob}_{1,\gamma}) = \mathcal{H}(K^{\cos}_{1,\gamma\pi^{-2}}) \quad \mbox{and} \quad \|f\|_{K^{\sob}_{1,\gamma}} = \|f\|_{K^{\cos}_{1,\gamma \pi^{-2}}} \quad \mbox{for all } f \in \mathcal{H}(K^{\sob}_{1,\gamma}).
\end{equation*}
The same also applies for the higher dimensional tensor product space
\begin{equation*}
\mathcal{H}(K^{\sob}_{1,\bsgamma,s}) = \mathcal{H}(K^{\cos}_{1,\bsgamma\pi^{-2}, s}) \quad \mbox{and} \quad \|f\|_{K^{\sob}_{1,\bsgamma,s}} = \|f\|_{K^{\cos}_{1,\bsgamma \pi^{-2}, s}} \quad \mbox{for all } f \in \mathcal{H}(K^{\sob}_{1,\bsgamma, s}),
\end{equation*}
where $\bsgamma \pi^{-2}$ denotes the sequence $(\gamma_1 \pi^{-2},\ldots,\gamma_s \pi^{-2})$. Thus
\begin{equation*}
\mathcal{H}(K^{\sob}_{1,\bsgamma,s}) \leftrightharpoons \mathcal{H}(K^{\cos}_{1,\bsgamma,s}).
\end{equation*}

Now consider $\alpha > 1$. The function $x \mapsto x$ belongs to $\mathcal{H}(K^{\sob}_{\alpha,\gamma})$ for all $\alpha \in \mathbb{N}$. On the other hand we have
\begin{equation*}
x = \frac{1}{2} - \frac{4}{\pi^2} \sum_{\substack{k=1 \\[1mm] \text{$k$ odd}}}^\infty \frac{\cos(\pi k x)}{k^2} .
\end{equation*}
Hence the function $x\mapsto x$ is not in $\mathcal{H}(K^{\cos}_{\alpha,\gamma})$ for $\alpha \ge 3/2$ and therefore
\begin{equation*}
\mathcal{H}(K^{\sob}_{\alpha,\gamma}) \not\subset \mathcal{H}(K^{\cos}_{\alpha,\gamma}) \quad \mbox{for } \alpha \in \mathbb{N}, \alpha \ge 2.
\end{equation*}

Conversely, let $\alpha \in \mathbb{N}$, $\alpha \ge 2$. Let $f \in \mathcal{H}(K^{\cos}_{\alpha,\gamma})$ be given by
\begin{equation*}
f(x) = \widetilde{f}_{\cos}(0) +  \sum_{k=1}^\infty \widetilde{f}_{\cos}(k) \sqrt{2} \cos(\pi k x)
\end{equation*}
with
\begin{equation*}
\|f\|^2_{K^{\cos}_{\alpha,\gamma}} = |\widetilde{f}_{\cos}(0)|^2 + \frac{1}{\gamma} \sum_{k=1}^\infty |\widetilde{f}_{\cos}(k)|^2 |k|^{2\alpha} < \infty.
\end{equation*}
Then for $1 \le \tau \le \alpha$ we have
\begin{equation*}
f^{(\tau)}(x) = \sum_{k=1}^\infty \widetilde{f}_{\cos}(k) (-1)^{\lceil \tau/2 \rceil} (k\pi)^\tau \sqrt{2} \, \phi_\tau(\pi k x),
\end{equation*}
where $\phi_\tau(z) = \cos(z)$ for $\tau$ even and $\phi_\tau(z) = \sin(z)$ for $\tau$ odd and where for a real number $x$, $\lceil x \rceil$ denotes the smallest integer bigger or equal to $x$. Thus
\begin{equation*}
\frac{1}{\gamma} \left|\int_0^1 f^{(\tau)}(x) \,\mathrm{d} x \right|^2 \le \frac{1}{\gamma} \int_0^1 |f^{(\tau)}(x)|^2 \,\mathrm{d} x = \frac{\pi^{2\tau}}{\gamma}  \sum_{k=1}^\infty |\widetilde{f}_{\cos}(k)|^2 k^{2\tau} \le \pi^{2\tau} \|f\|^2_{K^{\cos}_{\alpha,\gamma}}.
\end{equation*}
Thus
\begin{equation*}
\|f\|_{K^{\sob}_{\alpha,\gamma}} \le  \left(\sum_{\tau=0}^\alpha \pi^{2\tau}\right)^{1/2} \|f\|_{K^{\cos}_{\alpha,\gamma}} = \left(\frac{\pi^{2(\alpha+1)}-1}{\pi^2-1}\right)^{1/2} \|f\|_{K^{\cos}_{\alpha,\gamma}}.
\end{equation*}
Thus we have
\begin{equation*}
\mathcal{H}(K^{\cos}_{\alpha,\gamma}) \hookrightarrow \mathcal{H}(K^{\sob}_{\alpha,\gamma}).
\end{equation*}
This result can be generalized to the tensor product space, thus
\begin{equation*}
\mathcal{H}(K^{\cos}_{\alpha,\bsgamma,s}) \hookrightarrow \mathcal{H}(K^{\sob}_{\alpha,\bsgamma,s}).
\end{equation*}

\subsection{The half-period cosine space and the Korobov space}

For $\alpha=1$ the embedding results for the half-period cosine space and the Korobov space follow from the previous two subsections.

Let now $\alpha > 1$. Let
\begin{equation*}
f(x) = \sin(2\pi x).
\end{equation*}
Then $f \in \mathcal{H}(K^{\kor}_{\alpha,\gamma})$ for all $\alpha > 1/2$. On the other hand we have for $k \in \mathbb{N}$
\begin{equation*}
\widetilde{f}_{\cos}(k) = \int_0^1 \sin(2\pi x) \sqrt{2} \cos(\pi k x) \,\mathrm{d} x = \left\{\begin{array}{ll} \frac{4\sqrt{2}}{\pi(4-k^2)} & \mbox{if } k \mbox{ is odd}, \\ 0 & \mbox{otherwise}. \end{array} \right.
\end{equation*}
Thus
\begin{equation*}
\|f\|_{K^{\cos}_{\alpha,\gamma}}^2 = \frac{1}{\gamma}\sum_{\satop{k=1}{k \ {\rm odd}}}^\infty |\widetilde{f}_{\cos}(k)|^2 k^{2\alpha} = \frac{1}{\gamma}\frac{32}{\pi^2} \sum_{k=1}^\infty \frac{(2k-1)^{2\alpha}}{((2k-1)^2-4)^2}.
\end{equation*}
Thus we have $\|f\|_{K^{\cos}_{\alpha,\gamma}} = \infty$ for $\alpha \ge 3/2$. Thus
\begin{equation*}
\mathcal{H}(K^{\kor}_{\alpha,\gamma}) \not\subset \mathcal{H}(K^{\cos}_{\alpha,\gamma}) \quad \mbox{for } \alpha \ge 3/2.
\end{equation*}

Conversely, let now
\begin{equation*}
f(x) = \cos(\pi x).
\end{equation*}
Then $f \in \mathcal{H}(K^{\cos}_{\alpha,\gamma})$ for all $\alpha > 1/2$. On the other hand we have for $h \in \mathbb{Z}$
\begin{equation*}
\widehat{f}_{\kor}(h) = \int_0^1 \cos(\pi x) \, \mathrm{e}^{-2\pi \mathrm{i} h x} \, \mathrm{d} x = \frac{4 \mathrm{i} h}{\pi (1 - 4h^2)}.
\end{equation*}
For any $\alpha \ge 1/2$ we have
\begin{equation*}
\|f\|^2_{K^{\kor}_{\alpha,\gamma}} = \frac{1}{\gamma}\frac{16}{\pi^2} \sum_{h \in \mathbb{Z} \setminus \{0\}} |h|^{2\alpha} \frac{h^2}{(4h^2-1)^2} = \infty,
\end{equation*}
which implies that $f \notin \mathcal{H}(K^{\kor}_{\alpha,\gamma})$.
As we need $\alpha > 1/2$ we therefore have
\begin{equation*}
\mathcal{H}(K^{\cos}_{\alpha,\gamma}) \not\subset \mathcal{H}(K^{\kor}_{\alpha,\gamma})
.
\end{equation*}

\subsection{Embeddings of the sum of the Korobov and half-period cosine space}

Since $K^{\kor+\cos}_{\alpha,\gamma} = \frac{1}{2} K^{\kor}_{\alpha,\gamma} + \frac{1}{2} K^{\cos}_{\alpha,\gamma}$ we obtain from results from \cite{A50} that
\begin{align*}
\mathcal{H}(K^{\kor}_{\alpha,\gamma}) & \hookrightarrow \mathcal{H}(K^{\kor+\cos}_{\alpha,\gamma}), \\ \mathcal{H}(K^{\cos}_{\alpha,\gamma}) & \hookrightarrow \mathcal{H}(K^{\kor+\cos}_{\alpha,\gamma}).
\end{align*}
On the other hand, as it was shown above, we have $\mathcal{H}(K^{\kor}_{\alpha,\gamma}), \mathcal{H}(K^{\cos}_{\alpha,\gamma}) \hookrightarrow \mathcal{H}(K^{\sob}_{\alpha,\gamma})$, thus
\begin{equation*}
\mathcal{H}(K^{\kor+\cos}_{\alpha,\gamma}) \hookrightarrow \mathcal{H}(K^{\sob}_{\alpha,\gamma}).
\end{equation*}

We have seen above that for $\alpha \ge 1/2$ we have $\mathcal{H}(K^{\cos}_{\alpha,\gamma}) \not\subset \mathcal{H}(K^{\kor}_{\alpha,\gamma})$, thus
\begin{equation*}
\mathcal{H}(K^{\kor+\cos}_{\alpha,\gamma}) \not\subset \mathcal{H}(K^{\kor}_{\alpha,\gamma})
\end{equation*}
and for $\alpha \ge 3/2$ we have $\mathcal{H}(K^{\kor}_{\alpha,\gamma}) \not\subset \mathcal{H}(K^{\cos}_{\alpha,\gamma})$ and therefore
\begin{equation*}
\mathcal{H}(K^{\kor + \cos}_{\alpha,\gamma}) \not\subset \mathcal{H}(K^{\cos}_{\alpha,\gamma})
\quad \mbox{for } \alpha \ge 3/2.
\end{equation*}

For $\alpha = 1$ we have $\mathcal{H}(K^{\sob}_{1,\gamma}) \leftrightharpoons \mathcal{H}(K^{\cos}_{1,\gamma})$ and therefore
\begin{equation*}
\mathcal{H}(K^{\sob}_{1,\gamma}) \leftrightharpoons \mathcal{H}(K^{\kor+\cos}_{1,\gamma}).
\end{equation*}


\subsection{Summary of embeddings}

We summarize the obtained embedding results in the following theorem.

\begin{theorem}
For $\alpha = 1$ we have
\begin{equation*}
\mathcal{H}(K^{\kor}_{1,\bsgamma,s}) \hookrightarrow \mathcal{H}(K^{\sob}_{1,\bsgamma,s}) \leftrightharpoons \mathcal{H}(K^{\cos}_{1,\bsgamma,s}) \leftrightharpoons \mathcal{H}(K^{\kor+\cos}_{1,\bsgamma,s})
\end{equation*}
and
\begin{equation*}
\mathcal{H}(K^{\sob}_{1,\bsgamma,s}),  \mathcal{H}(K^{\cos}_{1,\bsgamma,s}), \mathcal{H}(K^{\kor+\cos}_{1,\bsgamma,s}) \not \subset \mathcal{H}(K^{\kor}_{1,\bsgamma,s}).
\end{equation*}
For $\alpha \in \mathbb{N}$ with $\alpha > 1$ we have
\begin{equation*}
\mathcal{H}(K^{\kor}_{\alpha,\bsgamma,s}), \mathcal{H}(K^{\cos}_{\alpha,\bsgamma,s}) \hookrightarrow \mathcal{H}(K^{\kor+\cos}_{\alpha,\bsgamma,s}) \hookrightarrow \mathcal{H}(K^{\sob}_{\alpha,\bsgamma,s})
\end{equation*}
and for $\alpha \in \RR$ we have
\begin{align*}
  \mathcal{H}(K^{\cos}_{\alpha,\bsgamma,s})
  &\not\subset
  \mathcal{H}(K^{\kor}_{\alpha,\bsgamma,s})
  ,
  \\
  \mathcal{H}(K^{\kor+\cos}_{\alpha,\bsgamma,s})
  &\not\subset
  \mathcal{H}(K^{\kor}_{\alpha,\bsgamma,s})
  ,
  \\
  \mathcal{H}(K^{\kor+\cos}_{\alpha,\bsgamma,s})
  &\not\subset
  \mathcal{H}(K^{\cos}_{\alpha,\bsgamma,s})
  \quad \mbox{for } \alpha \ge 3/2
  ,
  \\
  \mathcal{H}(K^{\kor}_{\alpha,\bsgamma,s})
  &\not\subset \mathcal{H}(K^{\cos}_{\alpha,\bsgamma,s})
  \quad \mbox{for } \alpha \ge 3/2
  .
\end{align*}
\end{theorem}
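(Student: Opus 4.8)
The plan is to assemble the theorem from the one-dimensional ($s=1$) statements proved in the preceding subsections and then lift each of them to the $s$-dimensional tensor product setting. Almost all the genuine content is already in place: the coincidence $\mathcal{H}(K^{\sob}_{1,\gamma}) = \mathcal{H}(K^{\cos}_{1,\gamma\pi^{-2}})$ with equality of norms (the Lemma), the chain $\mathcal{H}(K^{\cos}_{\alpha,\gamma}) \hookrightarrow \mathcal{H}(K^{\sob}_{\alpha,\gamma})$ and $\mathcal{H}(K^{\kor}_{\alpha,\gamma}), \mathcal{H}(K^{\cos}_{\alpha,\gamma}) \hookrightarrow \mathcal{H}(K^{\kor+\cos}_{\alpha,\gamma}) \hookrightarrow \mathcal{H}(K^{\sob}_{\alpha,\gamma})$, and the non-embeddings exhibited by the explicit functions $x\mapsto x$, $x\mapsto\sin(2\pi x)$ and $x\mapsto\cos(\pi x)$.

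First I would record the tensorization principle: if $\mathcal{H}(K_1^{(j)}) \hookrightarrow \mathcal{H}(K_2^{(j)})$ with embedding constant $C_j$ for each $j\in[s]$, then the tensor product spaces satisfy $\mathcal{H}(\prod_j K_1^{(j)}) \hookrightarrow \mathcal{H}(\prod_j K_2^{(j)})$ with constant $\prod_j C_j$; and equality of the component spaces with equality of norms tensorizes to equality of the product spaces with equality of norms. This is standard for reproducing kernel Hilbert spaces (it follows from Aronszajn \cite{A50}; the embedding-constant bookkeeping is exactly as in the one-dimensional arguments above applied coordinatewise). Applying this to the Lemma gives $\mathcal{H}(K^{\sob}_{1,\bsgamma,s}) = \mathcal{H}(K^{\cos}_{1,\bsgamma\pi^{-2},s})$ with equal norms, hence $\mathcal{H}(K^{\sob}_{1,\bsgamma,s}) \leftrightharpoons \mathcal{H}(K^{\cos}_{1,\bsgamma,s})$ after absorbing the harmless $\pi^{-2}$ rescaling of the weights into $\bsgamma$ (as is done throughout the paper); combining with the $\alpha=1$ identity $\mathcal{H}(K^{\sob}_{1,\gamma})\leftrightharpoons\mathcal{H}(K^{\kor+\cos}_{1,\gamma})$ tensorized the same way yields the first displayed chain, and the well-known $\mathcal{H}(K^{\kor}_{1,\bsgamma,s})\hookrightarrow\mathcal{H}(K^{\sob}_{1,\bsgamma,s})$ supplies its leftmost arrow. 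For $\alpha>1$, tensorizing $\mathcal{H}(K^{\kor}_{\alpha,\gamma}),\mathcal{H}(K^{\cos}_{\alpha,\gamma})\hookrightarrow\mathcal{H}(K^{\kor+\cos}_{\alpha,\gamma})\hookrightarrow\mathcal{H}(K^{\sob}_{\alpha,\gamma})$ coordinatewise gives the second displayed chain.

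For the non-embedding claims I would use that a tensor product space contains all functions of the form $g(x_1)$ with $g$ in the first component space (and the $s=1$ norm controls the $s$-dimensional norm of such a function, up to the constant factors $K(x_j,y_j)$ contribute through $\prod_{j\ge2}1$). Hence the one-dimensional counterexamples lift directly: $g(\bsx)=x_1$ lies in $\mathcal{H}(K^{\sob}_{\alpha,\bsgamma,s})$ but, by the $s=1$ computation, not in $\mathcal{H}(K^{\cos}_{\alpha,\bsgamma,s})$ for $\alpha\ge3/2$; $\sin(2\pi x_1)$ lies in $\mathcal{H}(K^{\kor}_{\alpha,\bsgamma,s})$ but not in $\mathcal{H}(K^{\cos}_{\alpha,\bsgamma,s})$ for $\alpha\ge3/2$, which together with $\mathcal{H}(K^{\kor}_{\alpha,\bsgamma,s})\hookrightarrow\mathcal{H}(K^{\kor+\cos}_{\alpha,\bsgamma,s})$ gives $\mathcal{H}(K^{\kor+\cos}_{\alpha,\bsgamma,s})\not\subset\mathcal{H}(K^{\cos}_{\alpha,\bsgamma,s})$; and $\cos(\pi x_1)$ lies in $\mathcal{H}(K^{\cos}_{\alpha,\bsgamma,s})$ but not in $\mathcal{H}(K^{\kor}_{\alpha,\bsgamma,s})$ for every $\alpha>1/2$, which with $\mathcal{H}(K^{\cos}_{\alpha,\bsgamma,s})\hookrightarrow\mathcal{H}(K^{\kor+\cos}_{\alpha,\bsgamma,s})$ gives $\mathcal{H}(K^{\kor+\cos}_{\alpha,\bsgamma,s})\not\subset\mathcal{H}(K^{\kor}_{\alpha,\bsgamma,s})$; the last of these also handles $\mathcal{H}(K^{\cos}_{\alpha,\bsgamma,s})\not\subset\mathcal{H}(K^{\kor}_{\alpha,\bsgamma,s})$ directly. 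Finally, for $\alpha=1$ the non-embedding into the Korobov space: $\cos(\pi x_1)$ again does the job, since it lies in $\mathcal{H}(K^{\cos}_{1,\bsgamma,s})=\mathcal{H}(K^{\sob}_{1,\bsgamma,s})=\mathcal{H}(K^{\kor+\cos}_{1,\bsgamma,s})$ but not in $\mathcal{H}(K^{\kor}_{1,\bsgamma,s})$.

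The main thing to get right — and the only point requiring care rather than bookkeeping — is the precise functional-analytic statement that these operations on reproducing kernels behave as claimed: that $\mathcal{H}(\sum_i K_i)$ and tensor products interact correctly with the embedding relation and norms (including that a one-variable function $g(x_1)$ sitting in an $s$-fold tensor product really has the expected norm, which relies on $\langle 1,1\rangle$ in each remaining component equaling the constant term of $K$, here $1$). All of this is contained in Aronszajn's theory \cite{A50} as cited in the paper, so I would simply invoke it; no new estimate is needed. Hence the proof is essentially: state the tensorization lemma, quote \cite{A50} for it, and then read off each line of the theorem from the corresponding $s=1$ result above.
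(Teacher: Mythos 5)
Your proposal is correct and follows essentially the same route as the paper: the theorem there is explicitly a summary of the one-dimensional embeddings and counterexamples established in the preceding subsections, lifted to the tensor product setting exactly as you describe (the paper merely asserts the tensorization, e.g.\ ``This result can be generalized to the tensor product space'', while you make the Aronszajn-based bookkeeping explicit). The only cosmetic difference is that for the $\alpha=1$ non-embedding into the Korobov space the paper uses $g(\bsx)=x_1$ where you use $\cos(\pi x_1)$; both work.
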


We do not know whether  $\mathcal{H}(K^{\sob}_{\alpha,\bsgamma,s})$ differs from $\mathcal{H}(K^{\kor+\cos}_{\alpha,\bsgamma,s})$ for $\alpha > 1$.

\section{Numerical integration}\label{sec_num_int}

We now study the worst-case error for QMC
integration. As quality measure for the QMC algorithm we use the worst-case integration error.

Let $P = \{\bsx_0,\ldots, \bsx_{N-1}\}$ and let $\mathcal{H}(K)$ be an arbitrary reproducing kernel Hilbert space with reproducing kernel $K$ and norm $\|\cdot\|_K$. Then the worst-case error for QMC integration in $\mathcal{H}(K)$ using the point set $P$ is defined as
\begin{equation*}
e(\mathcal{H}(K); P) := \sup_{\satop{f \in \mathcal{H}(K)}{\|f\|_{K}
\le 1}} \left|\int_{[0,1]^s} f(\bsx)\rd \bsx - \frac{1}{N}
\sum_{n=0}^{N-1} f(\bsx_n) \right|,
\end{equation*}
see, e.g., \cite{DP,NW08} for a general reference. We use the following formula for the square worst-case error (see \cite[Proposition~2.11]{DP} or \cite{Hic98a}):
\begin{equation*}
e^2(\mathcal{H}(K);P) = \int_{[0,1]^{2s}} K(\bsx,\bsy) \,\mathrm{d} \bsx \,\mathrm{d} \bsy - \frac{2}{N} \sum_{n=0}^{N-1} \int_{[0,1]^s} K(\bsx,\bsx_n) \,\mathrm{d} \bsx + \frac{1}{N^2}\sum_{n,n'=0}^{N-1} K(\bsx_n,\bsx_{n'}).
\end{equation*}

Integration in the Sobolev space $\mathcal{H}(K^{\sob}_{\alpha,\bsgamma,s})$ has been considered in \cite{D08, DP} and integration in the Korobov space has been studied for instance in \cite{D04,D07,HN2003,kor,kuo,NC06,NC06b}, as well as other papers. In this paper we study numerical integration in the half-period cosine space and in the sum of the Korobov space and the half-period cosine space. For the former space we use tent-transformed lattice rules and for the latter one we use symmetrized lattice rules.

\subsection{Numerical integration in the half-period cosine space}

We now study numerical integration in the half-period cosine space using tent-transformed lattice rules. For a nonnegative real number $x$ we denote the fractional part of $x$ by $\{x\} = x - \lfloor x \rfloor$. For a vector $\bsx$ of nonnegative real numbers, the expression $\{\bsx\}$ denotes the vector of fractional parts. A lattice point set with $N \ge 2$ points and generating vector $\bsg \in \{1,\ldots, N-1\}^s$ is given by
\begin{equation}\label{LR}
  P(\bsg,N)
  :=
  \left\{ \left\{\frac{n \bsg}{N}\right\} : 0 \le n < N \right\}.
\end{equation}
For $x \in [0,1]$ we define the tent-transformation by $\phi(x) = 1-|2x-1|$ and for vectors we apply the function $\phi$ component-wise. The tent-transformed lattice point set is now given by
$$
  P_{\phi}(\bsg,N)
  :=
  \left\{ \phi\left(\left\{\frac{n \bsg}{N}\right\}\right): 0 \le n < N \right\}.
$$
We call a lattice rule which is based on $P_{\phi}(\bsg,N)$ a tent-transformed lattice rule.

The following theorem gives a useful formula for the worst-case
integration error in $\mathcal{H}(K^{\cos}_{\alpha,\bsgamma,s})$ of
tent-transformed lattice rules.
\begin{theorem}\label{col:tentwce}
The squared worst-case error for QMC integration in the half-period
cosine space $\mathcal{H}(K_{\alpha,\bsgamma,s}^{\cos})$ using a
tent-transformed lattice rule is given by
  \begin{equation*}
    e^2(\mathcal{H}(K_{\alpha,\bsgamma,s}^{\cos}); P_{\phi}(\bsg,N))
    =
    \sum_{\bsh \in L^\perp \setminus \{\bszero\}} r_{\alpha,\bsgamma,s}( \bsh)
    ,
  \end{equation*}
 where $L^\perp := \{ \bsh \in \ZZ^s : \bsh \cdot \bsg \equiv 0 \pmod{N} \}$ is the dual lattice.
\end{theorem}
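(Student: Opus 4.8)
The plan is to substitute $K=K^{\cos}_{\alpha,\bsgamma,s}$ and the node set $P_{\phi}(\bsg,N)$, whose $n$-th point is $\bsx_n=\phi(\{n\bsg/N\})$, into the closed formula for the squared worst-case error quoted above, and to evaluate the three resulting terms. First I would dispatch the first two terms. Since $K^{\cos}_{\alpha,\bsgamma,s}$ is a product of one-dimensional kernels and $\int_0^1\sqrt2\cos(\pi k x)\rd x=0$ for every $k\ge1$, integrating out one argument of $K^{\cos}_{\alpha,\gamma}$ leaves only its constant term, so that $\int_0^1 K^{\cos}_{\alpha,\gamma}(x,y)\rd x=1$ for every $y$; hence $\int_{[0,1]^{2s}}K^{\cos}_{\alpha,\bsgamma,s}(\bsx,\bsy)\rd\bsx\rd\bsy=1$ and $\int_{[0,1]^s}K^{\cos}_{\alpha,\bsgamma,s}(\bsx,\bsx_n)\rd\bsx=1$ for each $n$, so the first term is $1$ and the term $\frac2N\sum_{n=0}^{N-1}\int_{[0,1]^s}K^{\cos}_{\alpha,\bsgamma,s}(\bsx,\bsx_n)\rd\bsx$ is $2$. (Absolute convergence of the cosine series, which holds because $\alpha>1/2$, justifies integrating term by term.)

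The heart of the proof is the third term $\frac1{N^2}\sum_{n,n'=0}^{N-1}K^{\cos}_{\alpha,\bsgamma,s}(\bsx_n,\bsx_{n'})$. Here the key elementary fact is the identity
\[
\cos(\pi k\,\phi(x))=\cos(2\pi k x)\qquad\text{for all }x\in[0,1],\ k\in\ZZ,
\]
obtained by checking the two affine branches $\phi(x)=2x$ on $[0,\tfrac12]$ and $\phi(x)=2-2x$ on $[\tfrac12,1]$ and using $2\pi k\in2\pi\ZZ$. Writing $K^{\cos}_{\alpha,\bsgamma,s}$ in its eigenfunction form $\sum_{\bsk\in\NN_0^s}r_{\alpha,\bsgamma,s}(\bsk)\,\varphi_{\bsk}(\bsx)\,\varphi_{\bsk}(\bsy)$, where $\varphi_{\bsk}(\bsx)=2^{|\bsk|_0/2}\prod_{j=1}^{s}\cos(\pi k_j x_j)$, this identity yields $\varphi_{\bsk}(\bsx_n)=2^{|\bsk|_0/2}\prod_{j=1}^{s}\cos(2\pi k_j n g_j/N)$, and the third term becomes
\[
\sum_{\bsk\in\NN_0^s}r_{\alpha,\bsgamma,s}(\bsk)\Bigl(\frac1N\sum_{n=0}^{N-1}\varphi_{\bsk}(\bsx_n)\Bigr)^{2}.
\]
Expanding each cosine as $\tfrac12(\E^{2\pi\I k_jng_j/N}+\E^{-2\pi\I k_jng_j/N})$ and applying the character-sum identity that $\frac1N\sum_{n=0}^{N-1}\E^{2\pi\I m n/N}$ equals $1$ when $N\mid m$ and $0$ otherwise, the inner average collapses to a sum of indicators that the corresponding signed frequency vectors lie in $L^\perp$; re-summing over $\bsk$ and its sign patterns (tracking the normalizing powers of $2$ coming from the $2^{|\bsk|_0/2}$-factors and from the expansion) then reduces the third term to $\sum_{\bsh\in L^\perp}r_{\alpha,\bsgamma,s}(\bsh)$.

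Combining the three terms gives $e^2(\mathcal H(K^{\cos}_{\alpha,\bsgamma,s});P_{\phi}(\bsg,N))=1-2+\sum_{\bsh\in L^\perp}r_{\alpha,\bsgamma,s}(\bsh)=\sum_{\bsh\in L^\perp\setminus\{\bszero\}}r_{\alpha,\bsgamma,s}(\bsh)$, since the $\bsh=\bszero$ summand equals $r_{\alpha,\bsgamma,s}(\bszero)=1$. The step I expect to be the main obstacle is the final manipulation of the third term: organizing the $2^{|\bsk|_0}$ sign choices produced by the exponential expansion, applying the character sum, and verifying that the recombination over all $\bsh\in\ZZ^s$ reproduces precisely the dual-lattice sum, with the powers of $2$ balancing correctly against the weights $2^{-|\bsh|_0}$ appearing in $\|\cdot\|_{K^{\cos}_{\alpha,\bsgamma,s}}$. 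An equivalent and more conceptual route is to observe that $f\mapsto f\circ\phi$ is a norm-preserving map of $\mathcal H(K^{\cos}_{\alpha,\bsgamma,s})$ into the Korobov space $\mathcal H(K^{\kor}_{\alpha,\bsgamma,s})$ which carries the integral to itself and the plain lattice rule $P(\bsg,N)$ to the tent-transformed rule $P_{\phi}(\bsg,N)$; the theorem then follows from the classical worst-case error formula for rank-$1$ lattice rules in the Korobov space.
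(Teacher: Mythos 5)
Your overall strategy---substituting the kernel into the standard worst-case error formula---is a legitimate alternative to the paper's route (which works with the error functional for a general $f$, applies Cauchy--Schwarz, and exhibits a purported extremal function), and your evaluation of the first two terms is correct. The genuine gap is exactly the step you flag as the main obstacle: the recombination of sign patterns does \emph{not} reduce the third term to $\sum_{\bsh\in L^\perp}r_{\alpha,\bsgamma,s}(\bsh)$. Writing $M(\bsk):=\#\{\bsh\in L^\perp:|\bsh|=\bsk\}$, the character-sum computation gives
\[
\frac1N\sum_{n=0}^{N-1}\prod_{j=1}^s\cos(2\pi k_j n g_j/N)=2^{-|\bsk|_0}M(\bsk),
\]
so that
\[
\frac{1}{N^2}\sum_{n,n'=0}^{N-1}K^{\cos}_{\alpha,\bsgamma,s}(\bsx_n,\bsx_{n'})
=\sum_{\bsk\in\NN_0^s}r_{\alpha,\bsgamma,s}(\bsk)\,2^{-|\bsk|_0}M(\bsk)^2 ,
\]
whereas $\sum_{\bsh\in L^\perp}r_{\alpha,\bsgamma,s}(\bsh)=\sum_{\bsk\in\NN_0^s}r_{\alpha,\bsgamma,s}(\bsk)\,M(\bsk)$. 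Since $0\le M(\bsk)\le 2^{|\bsk|_0}$ we only have $2^{-|\bsk|_0}M(\bsk)^2\le M(\bsk)$, with equality precisely when $M(\bsk)\in\{0,2^{|\bsk|_0}\}$, i.e.\ when the dual lattice is invariant under componentwise sign changes. This fails in general: for $s=2$, $N=5$, $\bsg=(1,2)$ and $\bsk=(1,2)$ we have $(1,2),(-1,-2)\in L^\perp$ but $(1,-2),(-1,2)\notin L^\perp$, so $M(\bsk)=2<4=2^{|\bsk|_0}$. Your computation, carried out honestly, therefore produces a quantity that is in general \emph{strictly smaller} than the right-hand side of the theorem; the powers of $2$ do not balance as you hoped. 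The same obstruction defeats your ``conceptual'' alternative: $f\mapsto f\circ\phi$ is an isometry of $\mathcal{H}(K^{\cos}_{\alpha,\bsgamma,s})$ onto a proper closed subspace of $\mathcal{H}(K^{\kor}_{\alpha,\bsgamma,s})$ (the image's Fourier coefficients depend only on $|\bsh|$), so taking the supremum over its unit ball yields only ``$\le$'' against the Korobov worst-case error.

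What your argument does establish is the upper bound $e^2(\mathcal{H}(K^{\cos}_{\alpha,\bsgamma,s});P_\phi(\bsg,N))\le\sum_{\bsh\in L^\perp\setminus\{\bszero\}}r_{\alpha,\bsgamma,s}(\bsh)$, which is also what the paper's Cauchy--Schwarz step rigorously proves and which suffices for all the subsequent corollaries on convergence rates. (Be aware that the paper's claimed attainment of equality---by the function with $\widetilde f_{\cos}(\bsk)=(\sqrt2)^{|\bsk|_0}r_{\alpha,\bsgamma,s}(\bsk)$ on $\{|\bsh|:\bsh\in L^\perp\setminus\{\bszero\}\}$---runs into exactly the same sign-pattern issue, since such an $f$ also assigns nonzero coefficients to frequency vectors outside $L^\perp$; so the discrepancy you would uncover is not an artifact of your method.) The exact value delivered by your approach is $e^2=\sum_{\bsk\ne\bszero}2^{-|\bsk|_0}M(\bsk)^2\,r_{\alpha,\bsgamma,s}(\bsk)$; to obtain the theorem as an equality you would need the additional hypothesis that $L^\perp$ is closed under componentwise sign flips, and otherwise you should state and use the result as an inequality.
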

\begin{proof}
Let $f \in \mathcal{H}(K_{\alpha,\bsgamma,s}^{\cos})$ with
$\|f\|_{K_{\alpha,\bsgamma,s}^{\cos}}< \infty$ and with expansion
\begin{equation}\label{eq_f_expansion}
f(\bsx) = \sum_{\bsk \in \mathbb{N}_0^s} \widetilde{f}_{\cos}(\bsk) (\sqrt{2})^{|\bsk|_0} \prod_{j=1}^s \cos(\pi k_i x_i).
\end{equation}
For any $k \in \mathbb{N}_0$ we have
\begin{equation*}
\cos(\pi k \phi(x)) = \cos(2\pi k x) \quad \mbox{for all } x \in
[0,1],
\end{equation*}
and hence
\begin{eqnarray*}
f\left(\phi\left(\left\{ \frac{n \bsg}{N} \right\}\right)\right) & = & \sum_{\bsk \in \NN_0^s} (\sqrt{2})^{|\bsk|_0} \widetilde{f}_{\cos}(\bsk) \prod_{j=1}^s \cos\left(\pi k_j  \phi\left(\left\{ \frac{n g_j}{N} \right\}\right)\right)\\
& = & \sum_{\bsk \in \NN_0^s} (\sqrt{2})^{|\bsk|_0} \widetilde{f}_{\cos}(\bsk) \prod_{j=1}^s \cos\left(2 \pi k_j  \frac{n g_j}{N} \right)\\
& = & \sum_{\bsh \in \ZZ^s} (\sqrt{2})^{-|\bsh|_0} \widetilde{f}_{\cos}(|\bsh|) \, \E^{2 \pi \I n (\bsh \cdot \bsg)/N}.
\end{eqnarray*}
Therefore we obtain
\begin{align*}
    \frac1N \sum_{n=0}^{N-1} f\left(\phi\left(\left\{ \frac{n \bsg}{N} \right\}\right)\right)
    - \int_{[0,1]^s} f(\bsx) \rd \bsx
    &=\sum_{\bszero \ne \bsh \in \ZZ^s} (\sqrt2)^{-|\bsh|_0} \widetilde{f}_{\cos}(|\bsh|)
    \left( \frac1N \sum_{n=0}^{N-1} \E^{2\pi \I \, n (\bsh \cdot \bsg) / N} \right)
    .
\end{align*}
The sum in the braces is a character sum over the group $\ZZ/N\ZZ$ which is one if $\bsh \cdot \bsg$ is a multiple of $N$ and zero otherwise. From this we get
\begin{align}\label{err_fo_tent}
    \frac1N \sum_{n=0}^{N-1} f\left(\phi\left(\left\{ \frac{n \bsg}{N} \right\}\right)\right)
    - \int_{[0,1]^s} f(\bsx) \rd \bsx
    &=
    \sum_{\bsh \in L^\perp \setminus \{\bszero\}} (\sqrt2)^{-|\bsh|_0}\widetilde{f}_{\cos}(|\bsh|)
    .
\end{align}
From this formula and an application of the Cauchy--Schwarz inequality we obtain
\begin{align*}
& \left|\frac1N \sum_{n=0}^{N-1} f\left(\phi\left(\left\{ \frac{n \bsg}{N} \right\}\right)\right)-\int_{[0,1]^s} f(\bsx) \rd \bsx\right|
  \\  &\qquad\qquad=
\left|  \sum_{\bsh \in L^\perp \setminus \{\bszero\} }
r_{\alpha,\bsgamma,s}(\bsh)^{1/2} \, \frac{(\sqrt2)^{-|\bsh|_0}
\widetilde{f}_{\cos}(|\bsh|)}{r_{\alpha,\bsgamma,s}(\bsh)^{1/2}} \right|
    \\
    &\qquad\qquad\le
\left( \sum_{\bsh \in L^\perp \setminus \{\bszero\} }
r_{\alpha,\bsgamma,s}(\bsh) \right)^{1/2} \left( \sum_{\bsh \in
\ZZ^s} \frac{2^{-|\bsh|_0}
|\widetilde{f}_{\cos}(|\bsh|)|^2}{r_{\alpha,\bsgamma,s}(\bsh)}
\right)^{1/2}
    \\
    &\qquad\qquad=
\left( \sum_{\bsh \in L^\perp \setminus \{\bszero\}}
r_{\alpha,\bsgamma,s}(\bsh) \right)^{1/2}
    \| f \|_{K_{\alpha,\bsgamma,s}^{\cos}}
    .
  \end{align*}
Here we obtain equality by~\eqref{err_fo_tent} for the function with cosine series coefficients given by
$$
  \widetilde{f}_{\cos}(\bsk)
  =
  \begin{cases}
     (\sqrt{2})^{|\bsk|_0} r_{\alpha,\bsgamma,s}(\bsk)
       & \mbox{for } \bsk \in \{ |\bsh| : \bsh \in L^\perp \setminus \{\bszero\} \} \\
     0 & \mbox{otherwise}.
   \end{cases}
$$
Hence the result follows by the definition of the worst-case error.
\end{proof}

The above result shows that the square worst-case error of tent-transformed lattice rules for
numerical integration in the cosine space coincides with the square
worst-case error for numerical integration in a Korobov space using the same lattice rules but without applying the tent-transformation, that is,
\begin{eqnarray*}
e^2(\mathcal{H}(K^{\kor}_{\alpha,\bsgamma,s}); P(\bsg,N)) & = & -1 + \sum_{\bsh \in \mathbb{Z}^s} r_{\alpha,\bsgamma,s}(\bsh) \left|\frac{1}{N} \sum_{n=0}^{N-1} \mathrm{e}^{2\pi \mathrm{i} n \bsh \cdot \bsg/N} \right|^2 \\ & = & \sum_{\bsh \in L^\perp\setminus \{\bszero\}} r_{\alpha,\bsgamma,s}(\bsh) \\ & = & e^2(\mathcal{H}(K_{\alpha,\bsgamma,s}^{\cos}); P_{\phi}(\bsg,N)).
\end{eqnarray*}
Thus all the results which hold for the worst-case error in the Korobov space using lattice rules, also hold for the worst-case error in the half-period cosine space using tent-transformed lattice rules. This applies for instance to the component-by-component construction \cite{D04,kuo,SR,SKJ} and fast component-by-component construction \cite{NC06,NC06b}, general weights \cite{DSWW,KSS} and extensible lattice rules \cite{CKN2006,DPW,HKKN2011,HN2003}.

\begin{corollary}
Using the fast component-by-component algorithm one can obtain a generating vector $\bsg \in \{1,\ldots, N-1\}^s$ such that
$$e(\mathcal{H}(K_{\alpha,\bsgamma,s}^{\cos});P_{\phi}(\bsg,N)) \le C_{\alpha,\bsgamma,s,\tau} (N-1)^{-\tau/2},$$
for all $1 \le \tau < 2\alpha$, where the constant $C_{\alpha,\bsgamma,s,\tau} > 0$ is given by
\begin{equation*}
C_{\alpha,\bsgamma,s,\tau} = \left(\sum_{\emptyset \neq \uu \subseteq [s]} \gamma_{\uu}^{1/\tau} (2 \zeta(2\alpha/\tau))^{|\uu|} \right)^{\tau/2}=\left(-1+\prod_{j=1}^s(1+2 \zeta(2\alpha/\tau) \gamma_j^{1/\tau})\right)^{\tau/2}.
\end{equation*}
\end{corollary}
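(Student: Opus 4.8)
The plan is to reduce the corollary to a known result for the Korobov space. By Theorem~\ref{col:tentwce} we have the identity
$$
  e^2(\mathcal{H}(K_{\alpha,\bsgamma,s}^{\cos}); P_{\phi}(\bsg,N))
  =
  \sum_{\bsh \in L^\perp \setminus \{\bszero\}} r_{\alpha,\bsgamma,s}(\bsh)
  =
  e^2(\mathcal{H}(K^{\kor}_{\alpha,\bsgamma,s}); P(\bsg,N)),
$$
so it suffices to invoke the standard component-by-component (CBC) existence result for lattice rules in the Korobov space: the fast CBC algorithm produces a generating vector $\bsg \in \{1,\ldots,N-1\}^s$ for which the Korobov worst-case error satisfies the stated bound. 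First I would cite the relevant CBC convergence theorem (e.g.\ from \cite{D04,kuo,NC06,NC06b}), which asserts that for every $1 \le \tau < 2\alpha$ there is a $\bsg$ with
$$
  \sum_{\bsh \in L^\perp \setminus \{\bszero\}} r_{\alpha,\bsgamma,s}(\bsh)
  \le
  C_{\alpha,\bsgamma,s,\tau}^{2/\tau}\,(N-1)^{-\tau},
$$
via Jensen's inequality applied with exponent $\lambda = 1/\tau \in (1/(2\alpha), 1]$; taking square roots gives the error bound with exponent $(N-1)^{-\tau/2}$.

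The second task is to verify the explicit form of the constant. Starting from the Jensen step, one bounds $\big(\sum_{\bsh\in L^\perp\setminus\{\bszero\}} r_{\alpha,\bsgamma,s}(\bsh)\big)^{\lambda} \le \sum_{\bsh\in\ZZ^s\setminus\{\bszero\}} r_{\alpha,\bsgamma,s}(\bsh)^{\lambda}$ times an averaging factor $(N-1)^{-1}$, and the key quantity that emerges is
$$
  \sum_{\bsh \in \ZZ^s\setminus\{\bszero\}} r_{\alpha,\bsgamma,s}(\bsh)^{1/\tau}
  =
  -1 + \prod_{j=1}^s \Big(1 + 2\gamma_j^{1/\tau}\sum_{h=1}^\infty h^{-2\alpha/\tau}\Big)
  =
  -1 + \prod_{j=1}^s \big(1 + 2\zeta(2\alpha/\tau)\,\gamma_j^{1/\tau}\big),
$$
which converges precisely because $2\alpha/\tau > 1$. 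Expanding the product over subsets $\uu \subseteq [s]$ and writing $\gamma_{\uu} = \prod_{j\in\uu}\gamma_j$ yields the alternative expression $\sum_{\emptyset\neq\uu\subseteq[s]}\gamma_{\uu}^{1/\tau}(2\zeta(2\alpha/\tau))^{|\uu|}$; raising to the power $\tau/2$ recovers $C_{\alpha,\bsgamma,s,\tau}$ in both stated forms. The equality of the two displayed formulas for the constant is then just the product-to-sum-over-subsets identity.

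I expect the main (and essentially only) obstacle to be purely expository: making sure the cited Korobov CBC theorem is stated with exactly the normalization of $r_{\alpha,\bsgamma,s}$ used here and with the $(N-1)^{-\tau/2}$ rather than $N^{-\tau/2}$ bound (the $N-1$ accounting for the fact that the nonzero-index contribution is what is being averaged, and for the CBC construction being carried out over $\{1,\ldots,N-1\}$). Everything else — the Jensen inequality, the Euler product factorization, and the subset expansion — is routine, and no new idea beyond Theorem~\ref{col:tentwce} is needed.
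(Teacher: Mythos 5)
Your proposal is correct and follows exactly the route the paper takes: the corollary is an immediate consequence of Theorem~\ref{col:tentwce}, which identifies $e^2(\mathcal{H}(K_{\alpha,\bsgamma,s}^{\cos}); P_{\phi}(\bsg,N))$ with the Korobov worst-case error $e^2(\mathcal{H}(K^{\kor}_{\alpha,\bsgamma,s}); P(\bsg,N))$, combined with the standard CBC convergence theorem from \cite{D04,kuo,NC06,NC06b}. The paper in fact gives no separate proof beyond this observation, so your additional sketch of the Jensen/zeta-function derivation of the constant is simply a more explicit unpacking of the cited result.
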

Note that for certain choices of weights $\bsgamma$, the upper bound can be made independent of the dimension $s$ and we then obtain (strong) tractability results. See \cite{D04,DSWW,kuo,NW08} for a discussion of tractability results which apply in this context.

Since the cosine series space $\mathcal{H}(K^{\cos}_{1,\gamma,s})$ and
the unanchored Sobolev space $\mathcal{H}(K^{\sob}_{1,\bsgamma,s})$ coincide, we also get the following
result.
\begin{corollary}
Using the fast component-by-component algorithm one can obtain a generating vector $\bsg \in \{1,\ldots, N-1\}^s$ such that
\begin{equation*}
e(\mathcal{H}(K^{\sob}_{1,\bsgamma,s}), P_{\phi}(\bsg,N)) \le C_{1,\bsgamma,s,\tau} (N-1)^{-\tau/2},
\end{equation*}
for all $1\le \tau < 2$, where
\begin{equation*}
C_{1,\bsgamma,s,\tau} = \left(\sum_{\emptyset \neq \uu \subseteq [s]} \gamma_{\uu}^{1/\tau} (2 \zeta(2/\tau))^{|\uu|} \right)^{\tau/2}=\left(-1+\prod_{j=1}^s(1+2 \zeta(2/\tau) \gamma_j^{1/\tau})\right)^{\tau/2}.
\end{equation*}
\end{corollary}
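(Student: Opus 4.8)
The plan is to derive this corollary directly from the preceding Corollary together with the embedding result $\mathcal{H}(K^{\sob}_{1,\bsgamma,s}) \leftrightharpoons \mathcal{H}(K^{\cos}_{1,\bsgamma,s})$ established in Section~\ref{sec_embeddings}. Recall that we actually showed something stronger than a mere norm equivalence: by the Lemma and its tensor-product consequence we have the isometric identification $\mathcal{H}(K^{\sob}_{1,\bsgamma,s}) = \mathcal{H}(K^{\cos}_{1,\bsgamma\pi^{-2},s})$ with $\|f\|_{K^{\sob}_{1,\bsgamma,s}} = \|f\|_{K^{\cos}_{1,\bsgamma\pi^{-2},s}}$ for all $f$ in the space, where $\bsgamma\pi^{-2} = (\gamma_1\pi^{-2},\ldots,\gamma_s\pi^{-2})$.

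First I would observe that, since the two spaces coincide isometrically up to the rescaling $\bsgamma \mapsto \bsgamma\pi^{-2}$, the worst-case errors coincide as well:
\begin{equation*}
e(\mathcal{H}(K^{\sob}_{1,\bsgamma,s}); P_{\phi}(\bsg,N)) = e(\mathcal{H}(K^{\cos}_{1,\bsgamma\pi^{-2},s}); P_{\phi}(\bsg,N)).
\end{equation*}
Indeed, the worst-case error is the operator norm of the error functional $f \mapsto \int f - \frac1N\sum_n f(\bsx_n)$ on the unit ball, and an isometry of Hilbert spaces that preserves function values (which our identification does, being the identity map on functions) preserves this operator norm. Then I would apply the previous Corollary with the weight sequence $\bsgamma\pi^{-2}$ in place of $\bsgamma$ and with $\alpha = 1$, obtaining a generating vector $\bsg$ constructed by the fast component-by-component algorithm such that $e(\mathcal{H}(K^{\cos}_{1,\bsgamma\pi^{-2},s}); P_{\phi}(\bsg,N)) \le C_{1,\bsgamma\pi^{-2},s,\tau}(N-1)^{-\tau/2}$ for all $1 \le \tau < 2$.

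It remains to rewrite the constant. Plugging $\gamma_j \mapsto \gamma_j\pi^{-2}$ into the product form $C_{\alpha,\bsgamma,s,\tau} = (-1+\prod_{j=1}^s(1+2\zeta(2\alpha/\tau)\gamma_j^{1/\tau}))^{\tau/2}$ with $\alpha=1$ gives
\begin{equation*}
C_{1,\bsgamma\pi^{-2},s,\tau} = \left(-1+\prod_{j=1}^s\left(1+2\zeta(2/\tau)(\gamma_j\pi^{-2})^{1/\tau}\right)\right)^{\tau/2} = \left(-1+\prod_{j=1}^s\left(1+2\pi^{-2/\tau}\zeta(2/\tau)\gamma_j^{1/\tau}\right)\right)^{\tau/2},
\end{equation*}
which can be absorbed into a single constant $C_{1,\bsgamma,s,\tau}>0$ depending only on $\alpha=1$, $\bsgamma$, $s$ and $\tau$; one may equally keep the explicit form. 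The only mild subtlety—and the part I would be most careful about—is justifying that the isometric identification of the two reproducing kernel Hilbert spaces legitimately transfers the cbc-constructed generating vector and its error bound verbatim; but this is immediate once one notes that the cbc construction depends only on the dual-lattice quantity $\sum_{\bsh\in L^\perp\setminus\{\bszero\}} r_{1,\bsgamma\pi^{-2},s}(\bsh)$ from Theorem~\ref{col:tentwce}, which is exactly the squared worst-case error in both spaces. No further obstacle arises, so the proof is essentially a one-line reduction plus bookkeeping of the constant.
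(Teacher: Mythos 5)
Your proof is correct and takes essentially the same route as the paper, which justifies this corollary in a single sentence by the coincidence of $\mathcal{H}(K^{\sob}_{1,\bsgamma,s})$ with the half-period cosine space and then invokes the preceding corollary. You are in fact more careful than the paper about the $\pi^{-2}$ rescaling of the weights; since $\pi^{-2/\tau}<1$, the constant you obtain is slightly sharper than the stated $C_{1,\bsgamma,s,\tau}$, which therefore remains a valid upper bound.
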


\begin{remark}
An analoguous result can also be obtained for the space considered in \cite{WW09}, since the eigenfunctions are the same as for $H(K^{\sob}_{1,\gamma})$.
\end{remark}

This result is a deterministic version of the main results in \cite{D04,kuo}, where a random shift was required to achieve this bound. The tractability results of \cite{D04,DSWW,kuo} also apply.

\subsection{Numerical integration in the Korobov plus half-period cosine space}
\label{sec:symm}

We now study numerical integration in the space $\mathcal{H}(K^{\kor+\cos}_{\alpha,\bsgamma,s})$ using symmetrized lattice rules.
Let $\bsx = (x_1,\ldots, x_s)$ and let $\uu \subseteq [s]$. Then let $\sym_{\uu}(\bsx)$ denote the vector whose $j$th coordinate is $x_j$ if $j \in \uu$ and $1-x_j$ otherwise, i.e., $\sym_{\uu}(\bsx) = (y_1,\ldots, y_s)$ with
\begin{align*}
y_j  &=
\begin{cases}
    1-x_j & \text{if } j \in \uu,\\
    x_j   & \text{if } j \not\in \uu.
\end{cases}
\end{align*}
For a lattice point set as in \eqref{LR} let
$$
  P_{\sym}(\bsg,N)
  :=
  \left\{ \sym_{\uu}\left(\left\{\frac{n \bsg}{N}\right\}\right): 0 \le n < N, \uu \subseteq [s] \right\}.$$
We call a lattice rule which is based on $P_{\sym}(\bsg,N)$ a symmetrized lattice rule.
Note that $P_{\sym}(\bsg,N)$ consists of $O(2^{s-1}N)$ elements as we show next.

\begin{lemma}
  The number of nodes in the symmetrized lattice rule $P_{\sym}(\bsg,N)$ is given by $2^{s-1} (N+1)$ if $2 \nmid N$ and $2^{s-1} N + 1$ if $2 \mid N$.
\end{lemma}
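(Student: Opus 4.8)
The plan is to realise $P_{\sym}(\bsg,N)$ as a union of orbits of the finite group $G:=\{\sym_{\uu}:\uu\subseteq[s]\}$ acting on $[0,1]^s$, and then to count these orbits and their sizes. Here $G\cong(\ZZ/2\ZZ)^s$: with $S:=[s]\setminus\uu$, the map $\sym_{\uu}$ sends $x_j\mapsto1-x_j$ for $j\in S$ and fixes the other coordinates, and these maps form a group under composition. I use throughout the assumption (standard for lattice rules) that $\gcd(g_j,N)=1$ for every $j\in[s]$; consequently $L:=P(\bsg,N)$ is a subgroup of $(\RR/\ZZ)^s$ with exactly $N$ elements $\bsx_c:=\{c\bsg/N\}$ for $c\in\ZZ/N\ZZ$, the only element of $L$ having a zero coordinate is $\bszero$, and when $N$ is even every $g_j$ is odd.

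First I would record two easy facts. (i) For $\bsz\in[0,1]^s$, $\sym_{\uu}$ fixes $\bsz$ iff $z_j=1/2$ for all $j\notin\uu$, so the $G$-orbit of $\bsz$ has exactly $2^{s-t(\bsz)}$ elements, where $t(\bsz):=\#\{j\in[s]:z_j=1/2\}$; note $t$ is constant along each orbit. (ii) $P_{\sym}(\bsg,N)=\bigcup_{\bsx\in L}(G\text{-orbit of }\bsx)$ is exactly the union of the $G$-orbits that meet $L$, so, distinct orbits being disjoint, $|P_{\sym}(\bsg,N)|=\sum_{O}2^{s-t(O)}$, the sum running over the $G$-orbits $O$ with $O\cap L\neq\emptyset$.

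The crux is the claim that if $\bsx,\bsy\in L$ lie in the same $G$-orbit then $\bsy=\bsx$ or $\bsy=\{-\bsx\}$. To prove it, write $\bsy=\sym_{\uu}(\bsx)$ and $S:=[s]\setminus\uu$; then $y_j=x_j$ for $j\notin S$, so modulo one $\bsy-\bsx$ has support contained in $S$, and, being a difference of two elements of the group $L$, it lies in $L$. If $S\neq[s]$, choose $j_0\notin S$: then $(b-a)g_{j_0}\equiv0\pmod N$, where $\bsx=\bsx_a$ and $\bsy=\bsx_b$, and $\gcd(g_{j_0},N)=1$ forces $a\equiv b\pmod N$, i.e.\ $\bsy=\bsx$. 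If $S=[s]$, then $\bsy\equiv-\bsx\pmod1$, and since $\bsy\in L\subseteq[0,1)^s$ this forces $\bsy=\{-\bsx\}$. Hence the $G$-orbits meeting $L$ correspond bijectively, via $O\mapsto\{c:\bsx_c\in O\}$, to the orbits of the involution $c\mapsto-c$ on $\ZZ/N\ZZ$; there are $(N+1)/2$ of these if $N$ is odd and $N/2+1$ if $N$ is even, the fixed points of the involution being $c=0$, respectively $c\in\{0,N/2\}$.

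Finally I would read off the orbit sizes. If $N$ is odd, no coordinate of a point of $L$ equals $1/2$, so $t\equiv0$, every orbit meeting $L$ has $2^s$ points, and $|P_{\sym}(\bsg,N)|=2^s\cdot\tfrac{N+1}{2}=2^{s-1}(N+1)$. If $N$ is even, every $g_j$ is odd, so for each $j$ one has $cg_j\equiv N/2\pmod N$ iff $c\equiv N/2\pmod N$ (multiply by $g_j^{-1}$, which is again odd, and use $(N/2)m\equiv N/2\pmod N$ for odd $m$); hence the only point of $L$ with any coordinate equal to $1/2$ is $\bsx_{N/2}=(\tfrac{1}{2},\dots,\tfrac{1}{2})$, whose $G$-orbit is a singleton, while each of the remaining $N/2$ orbits meeting $L$ has $2^s$ points, so $|P_{\sym}(\bsg,N)|=2^s\cdot\tfrac{N}{2}+1=2^{s-1}N+1$. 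The step I expect to be the main obstacle is the crux claim — especially recognising that one needs $\gcd(g_j,N)=1$ for each individual $j$ (joint coprimality of $g_1,\dots,g_s$ and $N$ does not suffice, as small examples show) — together with the observation for even $N$ that the all-$\tfrac{1}{2}$ point is the unique point of $L$ with a $1/2$ coordinate; the rest is bookkeeping.
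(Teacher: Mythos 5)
Your proof is correct and follows essentially the same route as the paper's: the paper likewise exploits the pairing $\bsx_k \leftrightarrow \bsx_{N-k}$ (your involution $c \mapsto -c$ on $\ZZ/N\ZZ$) and treats $k=0$ and, for even $N$, $k=N/2$ separately. Your orbit-counting formulation is more careful in that it makes explicit the hypothesis $\gcd(g_j,N)=1$ for each $j$ (which the lemma genuinely needs) and verifies that no coincidences occur among the symmetrized points beyond this pairing — both points the paper's sketch leaves implicit.
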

\begin{proof}
  The argument comes from \cite{Zar72a}.
  We have the following symmetry
  $$
    k g_j \equiv N - (N-k) g_j \pmod{N}, \quad \text{for all } 0 \le k < N \text{ and } j=1,\ldots,s,
  $$
  which corresponds exactly to $x_{k,j} = 1 - x_{N-k,j}$.
  This means we only have to evaluate and symmetrize the points for $0 \le k \le N/2$.
  \begin{enumerate}
  \item For $0 < k < N/2$ we have $2^s (N-1)/2$ points if $2 \nmid N$ and $2^s (N/2-1)$ if $2 \mid N$. 
  \item For $k = 0$ symmetrization returns all $2^s$ corner points.
  \item If $2 \mid N$ then for $k = N/2$ we have $\bsx_{N/2} = (\tfrac12, \ldots, \tfrac12)$ and thus no symmetrization is needed.
  \end{enumerate}
  Counting the number of function evaluations now gives the result from above.
\end{proof}
The analysis used in the previous proof can also be used in an implementation where further stream lining can be done by noticing that $x \mapsto 1 - x$ is its own inverse and thus the $2^s$ symmetric points for $0 < k < N/2$ can be constructed easily in gray code ordering.
Nevertheless, we increase the number of points by a factor of $2^{s-1}$ and so this is only feasible for moderate dimensions.
For the derivations below we symmetrize all $N$ points for ease of notation.

The following theorem gives a useful formula for the
worst-case integration error in
$\mathcal{H}(K_{\alpha,\bsgamma,s}^{\kor+\cos})$ of symmetrized lattice
rules.

\begin{theorem}\label{symLR}
The squared worst-case error for QMC integration in the sum of the half-period
cosine space and the Korobov space $\mathcal{H}(K_{\alpha,\bsgamma,s}^{\kor+\cos})$ using a
symmetrized lattice rule is given by
$$e^2(\mathcal{H}(K_{\alpha,\bsgamma,s}^{\kor + \cos});
P_{\sym}(\bsg,N))= \sum_{\bsh \in L^\perp \setminus\{\bszero\}}
r_{\alpha,\bsgamma,s}(\bsh),$$ where $L^\perp := \{ \bsh \in
\ZZ^s: \bsh \cdot \bsg \equiv 0 \pmod{N} \}$ is the dual lattice.
\end{theorem}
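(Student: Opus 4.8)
The plan is to mimic the structure of the proof of Theorem~\ref{col:tentwce}, but now tracking separately the Korobov (exponential) and cosine parts in each coordinate, and then exploit the fact that symmetrization over all subsets $\uu\subseteq[s]$ exactly kills the cross-terms. Concretely, I would take an arbitrary $f\in\mathcal{H}(K_{\alpha,\bsgamma,s}^{\kor+\cos})$ and write it via the decomposition $K^{\kor+\cos}_{\alpha,\bsgamma,s}=\sum_{\uu\subseteq[s]}K^{\kor+\cos}_{\alpha,\bsgamma,s,\uu}$ given at the end of Section~\ref{sec_rkhs}. Equivalently, $f$ has a mixed expansion indexed by a subset $\uu$ (the coordinates where we use the Fourier basis $\mathrm{e}^{2\pi\mathrm{i}h_jx_j}$) and by frequencies $\bsh_{\uu}\in\ZZ^{|\uu|}$, $\bsk_{[s]\setminus\uu}\in\NN_0^{s-|\uu|}$, with coefficients $\widetilde{f}_{\uu,\kor+\cos}$ and with $\|f\|^2$ expressible through $r^{-1}_{\alpha,\bsgamma,s}$ as written there.

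The first key step is to compute the error functional $\frac1N\sum_{n=0}^{N-1}f(\sym_{\bsv}(\{n\bsg/N\})) - \int_{[0,1]^s}f\rd\bsx$ for a single reflection pattern $\bsv\subseteq[s]$ (using $\bsv$ for the reflection to avoid clashing with $\uu$), and then average over all $\bsv\subseteq[s]$. For each basis function in coordinate $j$ I use: $\int_0^1\cos(\pi k_jx_j)\rd x_j = \int_0^1\cos(\pi k_j(1-x_j))\rd x_j$ vanishes for $k_j\ne0$; $\cos(\pi k_j(1-x_j))=(-1)^{k_j}\cos(\pi k_jx_j)$, so reflection contributes a sign $(-1)^{k_j}$; and $\mathrm{e}^{2\pi\mathrm{i}h_j(1-x_j)}=\mathrm{e}^{-2\pi\mathrm{i}h_jx_j}$, so reflection in a Korobov coordinate flips $h_j\mapsto-h_j$. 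Averaging $2^{-s}\sum_{\bsv\subseteq[s]}$ over the sign factors $\prod_{j\in\bsv\cap([s]\setminus\uu)}(-1)^{k_j}$ forces every $k_j$ ($j\notin\uu$) to be even for a term to survive the averaging; and since a half-period cosine with even index $k_j=2m_j$ satisfies $\cos(\pi k_j\{ng_j/N\})=\mathrm{e}^{2\pi\mathrm{i}m_jng_j/N}+\mathrm{e}^{-2\pi\mathrm{i}m_jng_j/N}$ up to the usual factor, the cosine coordinates collapse exactly into Fourier coordinates. The upshot is that the symmetrized error functional depends on $f$ only through the genuinely periodic part of $f$, and on the point set only through the ordinary lattice character sum $\frac1N\sum_{n=0}^{N-1}\mathrm{e}^{2\pi\mathrm{i}n(\bsh\cdot\bsg)/N}$, which is $1$ when $\bsh\in L^\perp$ and $0$ otherwise.

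The second step is to bound $|\text{error}|$ by Cauchy--Schwarz against $\|f\|_{K^{\kor+\cos}_{\alpha,\bsgamma,s}}$, using that the coefficients appearing after the averaging are precisely (a weighted combination of) the $\widetilde f_{\uu,\kor+\cos}$ and that the $r_{\alpha,\bsgamma,s}$ weights match the norm. One must be a little careful here: the relevant Fourier-type frequency $\bsh\in\ZZ^s$ can arise both from a Korobov coordinate directly and from an even cosine coordinate, so the coefficient multiplying $\mathrm{e}^{2\pi\mathrm{i}n\bsh\cdot\bsg/N}$ is a sum over the subsets $\uu$ compatible with $\bsh$; the factor-of-$2^s$ and the $2^{-|\bsh|_0}$ combinatorial weights in the definitions of the norm and of $K^{\kor+\cos}_{\alpha,\bsgamma,s,\uu}$ are exactly what is needed for the Cauchy--Schwarz step to produce the clean bound $\big(\sum_{\bsh\in L^\perp\setminus\{\bszero\}}r_{\alpha,\bsgamma,s}(\bsh)\big)^{1/2}\|f\|_{K^{\kor+\cos}_{\alpha,\bsgamma,s}}$. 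Finally, sharpness: I exhibit the extremal $f$ (supported on frequencies $\bsh\in L^\perp\setminus\{\bszero\}$, realized purely through the Korobov coordinates, with coefficients proportional to $r_{\alpha,\bsgamma,s}(\bsh)$) for which Cauchy--Schwarz is an equality, and conclude by the definition of the worst-case error. Alternatively, and perhaps more cleanly, one can derive the formula directly from the quadrature formula for $e^2$ in terms of the kernel: $e^2=\int\!\!\int K_s - \frac2N\sum_n\int K_s(\cdot,\sym\text{-point}) + \frac1{N^2}\sum_{n,n'}K_s(\sym,\sym)$, expand $K^{\kor+\cos}_{\alpha,\bsgamma,s}$ in its Fourier/cosine series, and use the same reflection identities plus the character-sum orthogonality over $\ZZ/N\ZZ$; the double sum over reflection patterns again projects onto the periodic part.

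\medskip

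\noindent\textbf{Main obstacle.} The genuine bookkeeping difficulty is the last-but-one step: correctly accounting for the multiplicity with which a given integer frequency vector $\bsh$ is produced, since a coordinate $j$ with even cosine index $2m_j\ne0$ contributes the \emph{pair} $\pm m_j$ while a Korobov coordinate contributes $h_j$ with its own sign, and an index $h_j=0$ can come either from the constant cosine term or from the zero Fourier mode. Keeping the $2^{s}$, $2^{-|\bsh|_0}$, and $(\sqrt2)^{|\bsk|_0}$ normalization factors consistent across the $\uu$-decomposition of the norm and the reflection-averaging so that everything telescopes into the single weight $r_{\alpha,\bsgamma,s}(\bsh)$ is where care is required; conceptually, though, the point is simply that symmetrization integrates the odd-frequency (nonperiodic) cosine components exactly and reduces the problem to the already-understood Korobov case.
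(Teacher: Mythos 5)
Your upper-bound argument follows the paper's proof essentially step for step: the decomposition of $f$ over subsets $\vv \subseteq [s]$ coming from the sum-of-kernels structure, the reflection identity that annihilates odd cosine frequencies and converts even ones into exponentials, the character-sum orthogonality over $\ZZ/N\ZZ$, and the Cauchy--Schwarz step with the $2^{s}$ bookkeeping are exactly the paper's steps, and that part of your proposal is sound.

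The genuine gap is the sharpness step, and it cannot be repaired: the asserted equality is in fact false, and only ``$\le$'' can be proved. Your candidate extremal function (purely Korobov, coefficients proportional to $r_{\alpha,\bsgamma,s}(\bsh)$ on $L^{\perp}\setminus\{\bszero\}$) does have integration error $\sum_{\bsh \in L^{\perp}\setminus\{\bszero\}} r_{\alpha,\bsgamma,s}(\bsh)$ under the symmetrized rule, but its $\mathcal{H}(K^{\kor+\cos}_{\alpha,\bsgamma,s})$-norm strictly exceeds $\bigl(\sum_{\bsh \in L^{\perp}\setminus\{\bszero\}} r_{\alpha,\bsgamma,s}(\bsh)\bigr)^{1/2}$: the sum-space norm is a \emph{minimum} over decompositions $f=f_{\kor}+f_{\cos}$, and the even half-period cosine frequencies lie in both summands, so the optimal split is not the trivial one $f_{\cos}=0$. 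Your ``cleaner alternative'' of expanding the kernel in the exact formula for $e^{2}$ makes the failure concrete. Already for $s=1$: averaging $K^{\kor+\cos}_{\alpha,\gamma}$ over the four reflected argument pairs gives
\begin{equation*}
\frac14\sum_{\epsilon,\epsilon'\in\{0,1\}} K^{\kor+\cos}_{\alpha,\gamma}\bigl(\epsilon+(-1)^{\epsilon}x,\ \epsilon'+(-1)^{\epsilon'}y\bigr)
= 1 + \gamma\,(1+2^{-2\alpha})\sum_{k=1}^{\infty} k^{-2\alpha}\cos(2\pi k x)\cos(2\pi k y),
\end{equation*}
because the Korobov term $\cos(2\pi k(x-y))$ averages to $\cos(2\pi kx)\cos(2\pi ky)$ while the cosine term survives only for even $k$ and contributes the extra $2^{-2\alpha}$. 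The standard worst-case error formula then yields $e^{2}(\mathcal{H}(K^{\kor+\cos}_{\alpha,\gamma});P_{\sym}(g,N)) = \gamma(1+2^{-2\alpha})N^{-2\alpha}\zeta(2\alpha)$, whereas $\sum_{h \in L^{\perp}\setminus\{0\}} r_{\alpha,\gamma}(h) = 2\gamma N^{-2\alpha}\zeta(2\alpha)$; these differ by the factor $(1+2^{-2\alpha})/2<1$ for every $\alpha>1/2$. Note that the paper's own proof has the same limitation: its displayed chain ends with an inequality and the concluding ``Thus the result follows'' never addresses attainment. The theorem, and everything downstream of it (in particular the convergence corollary), remains valid with ``$=$'' read as ``$\le$'', but your plan to close the argument --- whether by exhibiting an extremal function or by direct kernel expansion --- would not establish the stated equality; the kernel expansion would instead disprove it.
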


\begin{proof}
Let $f \in \mathcal{H}(K_{\alpha,\bsgamma,s}^{\kor + \cos})$ with
$\|f\|_{K_{\alpha,\bsgamma,s}^{\kor+\cos}} < \infty$. Let $\vv \subseteq [s]$, then by \cite[Part~I, Section~6]{A50} there are functions $f_{\vv} \in \mathcal{H}(K^{\kor+\cos}_{\alpha,\bsgamma,s,\vv}) = \left( \bigotimes_{j\in \vv} \mathcal{H}(\tfrac12 K^{\kor}_{\alpha,\gamma_j}) \right) \otimes \left( \bigotimes_{j \in [s]\setminus \vv} \mathcal{H}(\tfrac12 K^{\cos}_{\alpha,\gamma_j}) \right)$ given by
\begin{equation*}
f_{\vv}(\bsx) = \sum_{\bsh_{\vv} \in \mathbb{Z}^{|\vv|}} \sum_{\bsk_{[s] \setminus \vv} \in \mathbb{N}_0^{s-|\vv|}} \widetilde{f}_{\vv,\kor+\cos}(\bsh_{\vv},\bsk_{[s]\setminus \vv}) \, (\sqrt2)^{|\bsk_{[s]\setminus \vv}|_0} \prod_{j\in \vv} \mathrm{e}^{2\pi \mathrm{i} h_j x_j} \prod_{j \in [s]\setminus \vv} \cos(\pi k_j x_j)
\end{equation*}
such that
\begin{equation*}
\|f\|^2_{K^{\kor+\cos}_{\alpha,\bsgamma,s}} = \sum_{\vv \subseteq [s]} \|f_{\vv}\|^2_{K^{\kor+\cos}_{\alpha,\bsgamma,s,\vv}}.
\end{equation*}

Note that for $k \in \ZZ$ and $x \in \RR$,
\begin{align*}
\cos(\pi k x) + \cos(\pi k (1-x))
    &=
    \begin{cases}
      2 \cos(\pi k x) =\mathrm{e}^{\pi \mathrm{i} k x} + \mathrm{e}^{-\pi \mathrm{i} kx}     & \text{if $k$ is even}, \\
      0                 & \text{if $k$ is odd}.
    \end{cases}
  \end{align*}
For given $\vv \subseteq [s]$ we therefore have
\begin{eqnarray*}
\lefteqn{\frac{1}{2^sN} \sum_{n=0}^{N-1} \sum_{\uu \subseteq [s]}f_{\vv}\left( \sym_{\uu}\left( \left\{ \frac{n \bsg}{N} \right\} \right) \right)}\\ &= & \frac{1}{2^sN} \sum_{n=0}^{N-1} \sum_{\bsh_{\vv} \in \mathbb{Z}^{|\vv|}}\sum_{\bsk_{[s]\setminus \vv} \in \NN_0^{s-|\vv|}}
    (\sqrt2)^{|\bsk_{[s]\setminus \vv}|_0} \widetilde{f}_{\vv,\kor+\cos}(\bsh_{\vv}, \bsk_{[s]\setminus \vv})
    \\ & & \times \prod_{j\in \vv} \left( \mathrm{e}^{2\pi \mathrm{i} h_j n g_j /N} + \mathrm{e}^{-2\pi \mathrm{i} h_j n g_j/N} \right) \prod_{j \in [s]\setminus \vv}
    \left( \cos(\pi k_j \{n g_j / N\})  +  \cos(\pi k_j (1-\{n g_j / N\})) \right)  \\ & = &
    \frac{1}{2^s N} \sum_{n=0}^{N-1} \sum_{\bsh_{\vv} \in \mathbb{Z}^{|\vv|}}
    \sum_{\bsk_{[s]\setminus \vv} \in \NN_0^{s-|\vv|}}
    (\sqrt2)^{|\bsk_{[s]\setminus \vv}|_0} \widetilde{f}_{\vv,\kor+\cos}(\bsh_{\vv}, 2 \bsk_{[s]\setminus \vv})
    \\ & & \times  \prod_{j\in \vv} \left( \mathrm{e}^{2\pi \mathrm{i} h_j n g_j /N} + \mathrm{e}^{-2\pi \mathrm{i} h_j n g_j/N} \right) \prod_{j \in [s]\setminus \vv}  \left( \mathrm{e}^{2\pi \mathrm{i} k_j n g_j /N} + \mathrm{e}^{-2\pi \mathrm{i} k_j n g_j/N} \right)
     \\& = & \frac{1}{2^s N} \sum_{n=0}^{N-1} \sum_{\bsh \in \mathbb{Z}^{s}}
    2^{|\vv|} (\sqrt2)^{|\bsh_{[s]\setminus \vv}|_0} 2^{s-|\vv|-|\bsh_{[s]\setminus \vv}|_0} \widetilde{f}_{\vv,\kor+\cos}(\bsh_{\vv},2 |\bsh_{[s]\setminus \vv}|) \mathrm{e}^{2\pi \mathrm{i} n \bsh \cdot \bsg /N} \\& = &  \sum_{\bsh \in \mathbb{Z}^{s}}
    (\sqrt2)^{-|\bsh_{[s]\setminus \vv}|_0}  \widetilde{f}_{\vv,\kor+\cos}(\bsh_{\vv},2 |\bsh_{[s]\setminus \vv}|) \left(\frac{1}{N} \sum_{n=0}^{N-1} \mathrm{e}^{2\pi \mathrm{i} n \bsh \cdot \bsg /N} \right).
  \end{eqnarray*}
The sum in the braces is a character sum over the group $\ZZ/N\ZZ$ which is one if $\bsh \cdot \bsg$ is a multiple of $N$ and zero otherwise. From this we get
\begin{multline}\label{err_f_sym}
\frac{1}{2^sN} \sum_{n=0}^{N-1} \sum_{\uu \subseteq [s]}
f_{\vv}\left( \sym_{\uu}\left( \left\{ \frac{n \bsg}{N} \right\} \right) \right) - \int_{[0,1]^s} f_{\vv}(\bsx) \,\mathrm{d} \bsx
\\=  \sum_{\bsh \in L^{\perp} \setminus \{\bszero\} } (\sqrt2)^{-|\bsh_{[s]\setminus \vv}|_0} \widetilde{f}_{\vv,\kor+\cos}(\bsh_{\vv},2 |\bsh_{[s]\setminus \vv}|).
\end{multline}
Thus we find
\begin{eqnarray*}
   \lefteqn{\left| \frac{1}{2^sN} \sum_{n=0}^{N-1} \sum_{\uu \subseteq [s]} f\left( \sym_{\uu}\left( \left\{ \frac{n \bsg}{N} \right\} \right) \right) - \int_{[0,1]^s} f(\bsx) \rd \bsx \right|}
    \\
    & \le &  \sum_{\vv \subseteq [s]} \sum_{\bsh \in L^{\perp} \setminus \{\bszero\} }  (\sqrt2)^{-|\bsh_{[s]\setminus \vv}|_0} | \widetilde{f}_{\vv,\kor+\cos}(\bsh_{\vv}, 2|\bsh_{[s]\setminus \vv}|) |
    \\
    &= &
    \sum_{\vv \subseteq [s]} \sum_{\bsh \in L^{\perp} \setminus \{\bszero\} } r_{\alpha,\bsgamma,s}(\bsh)^{1/2}   \frac{(\sqrt2)^{-|\bsh_{[s] \setminus \vv}|_0} | \widetilde{f}_{\vv,\kor+\cos}(\bsh_{\vv}, 2|\bsh_{[s] \setminus \vv}|) |}{r_{\alpha,\bsgamma,s}(\bsh)^{1/2}}
    \\
    &\le &
    \left(\sum_{\vv \subseteq [s]} \sum_{\bsh \in L^{\perp} \setminus \{\bszero\} } r_{\alpha,\bsgamma,s}(\bsh) \right)^{1/2}
    \left(\sum_{\vv \subseteq [s]} \sum_{\bsh \in L^{\perp} \setminus \{\bszero\} } \frac{2^{-|\bsh_{[s]\setminus \vv}|_0} | \widetilde{f}_{\vv,\kor+\cos}(\bsh_{\vv}, 2|\bsh_{[s]\setminus \vv}|) |^2}{r_{\alpha,\bsgamma,s}(\bsh)}\right)^{1/2}
    \\
    &\le &
    \left(2^s \sum_{\bsh \in L^{\perp} \setminus \{\bszero\} } r_{\alpha,\bsgamma,s}(\bsh) \right)^{1/2}
    \left(\frac{1}{2^s} \sum_{\vv \subseteq [s]} 2^s \sum_{\bsh_\vv \in \mathbb{Z}^{|\vv|} } \sum_{\bsk_{[s]\setminus \vv} \in \mathbb{N}_0^{s-|\vv|}} \frac{| \widetilde{f}_{\vv,\kor+\cos}(\bsh_{\vv}, \bsk_{[s]\setminus \vv}) |^2}{r_{\alpha,\bsgamma,s}(\bsh_\vv, \bsk_{[s]\setminus \vv})}\right)^{1/2}
    \\
    &\le &
    \left(2^s \sum_{\bsh \in L^{\perp} \setminus \{\bszero\} } r_{\alpha,\bsgamma,s}(\bsh) \right)^{1/2}
    \left(\frac{1}{2^s} \sum_{\vv \subseteq [s]} \| f \|^2_{K_{\alpha,\bsgamma,s,\vv}^{\kor+\cos}} \right)^{1/2}
    \\
    & \le & 
    \left(\sum_{\bsh \in L^{\perp} \setminus \{\bszero\} } r_{\alpha,\bsgamma,s}(\bsh) \right)^{1/2} \|f\|_{K^{\kor+\cos}_{\alpha,\bsgamma,s}}
    .
\end{eqnarray*}
Thus the result follows.
\end{proof}

Again we can relate the worst-case error $e^2(\mathcal{H}(K^{\kor+\cos}_{\alpha,\bsgamma,s});
P_{\sym}(\bsg,N))$ to the worst-case error in a Korobov space. We have
\begin{equation*}
e(\mathcal{H}(K^{\kor+\cos}_{\alpha,\bsgamma,s}); P_{\sym}(\bsg,N)) = e(\mathcal{H}(K^{\kor}_{\alpha,\bsgamma,s}); P(\bsg,N)).
\end{equation*}
Thus the results for integration in the sum of the half-period cosine space and the Korobov space using symmetrized lattice rules are the same as in a Korobov space using lattice rules. In particular, the component-by-component algorithm can be used \cite{D04,kuo,SR,SKJ} and also its fast version \cite{NC06,NC06b}, general weights \cite{DSWW,KSS} and extensible lattice rules \cite{CKN2006,DPW,HKKN2011,HN2003}.

\begin{corollary}
Using the fast component-by-component algorithm one can obtain a generating vector $\bsg \in \{1,\ldots, N-1\}^s$ such that
$$e(\mathcal{H}(K_{\alpha,\bsgamma,s}^{\kor+\cos});P_{\sym}(\bsg,N)) \le C_{\alpha,\bsgamma,s,\tau} (N-1)^{-\tau/2},$$
for all $1 \le \tau < 2\alpha$, where the constant $C_{\alpha,\bsgamma,s,\tau} > 0$ is given by
\begin{equation*}
C_{\alpha,\bsgamma,s,\tau} = \left(\sum_{\emptyset \neq \uu \subseteq [s]} \gamma_{\uu}^{1/\tau} (2 \zeta(2\alpha/\tau))^{|\uu|} \right)^{\tau/2}=\left(-1+\prod_{j=1}^s(1+2 \zeta(2\alpha/\tau) \gamma_j^{1/\tau})\right)^{\tau/2}.
\end{equation*}
\end{corollary}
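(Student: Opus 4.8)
The plan is to reduce the statement to the already established equivalence between the symmetrized lattice rule error in $\mathcal{H}(K^{\kor+\cos}_{\alpha,\bsgamma,s})$ and the plain lattice rule error in the Korobov space $\mathcal{H}(K^{\kor}_{\alpha,\bsgamma,s})$. By Theorem~\ref{symLR} and the displayed identity following its proof, we have
\begin{equation*}
  e(\mathcal{H}(K^{\kor+\cos}_{\alpha,\bsgamma,s}); P_{\sym}(\bsg,N))
  =
  e(\mathcal{H}(K^{\kor}_{\alpha,\bsgamma,s}); P(\bsg,N))
  ,
\end{equation*}
so it suffices to exhibit a generating vector $\bsg \in \{1,\ldots,N-1\}^s$ for which the right-hand side satisfies the claimed bound. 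This is precisely the content of the known component-by-component (CBC) convergence results for lattice rules in the weighted Korobov space, and the fast CBC algorithm of Nuyens and Cools \cite{NC06,NC06b} constructs such a vector in $O(s\, N \log N)$ operations.

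Concretely, I would invoke the standard CBC bound: there is a $\bsg$ produced by the (fast) CBC algorithm such that, for every $\lambda \in (1/(2\alpha),1]$,
\begin{equation*}
  e^2(\mathcal{H}(K^{\kor}_{\alpha,\bsgamma,s}); P(\bsg,N))
  \le
  \left( \frac{1}{N-1} \sum_{\emptyset \neq \uu \subseteq [s]} \gamma_{\uu}^{\lambda} \bigl(2\zeta(2\alpha\lambda)\bigr)^{|\uu|} \right)^{1/\lambda}
  ,
\end{equation*}
which follows from a Jensen-type argument applied to $\sum_{\bsh\in L^\perp\setminus\{\bszero\}} r_{\alpha,\bsgamma,s}(\bsh)$ together with the inductive CBC averaging estimate; see \cite{D04,kuo,NC06} (and \cite{DSWW,KSS} for general weights). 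Substituting $\lambda = 1/\tau$, i.e. $\tau = 1/\lambda \in [1,2\alpha)$, turns the exponent $1/\lambda$ into $\tau$ and the right-hand side into
\begin{equation*}
  \left( \frac{1}{N-1} \sum_{\emptyset \neq \uu \subseteq [s]} \gamma_{\uu}^{1/\tau} \bigl(2\zeta(2\alpha/\tau)\bigr)^{|\uu|} \right)^{\tau}
  =
  (N-1)^{-\tau}\, C_{\alpha,\bsgamma,s,\tau}^{2}
  ,
\end{equation*}
and taking square roots gives $e(\mathcal{H}(K^{\kor+\cos}_{\alpha,\bsgamma,s}); P_{\sym}(\bsg,N)) \le C_{\alpha,\bsgamma,s,\tau}(N-1)^{-\tau/2}$. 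The equivalent product form of $C_{\alpha,\bsgamma,s,\tau}$ is just the identity $\sum_{\uu\subseteq[s]} \prod_{j\in\uu} a_j = \prod_{j=1}^s (1+a_j)$ with $a_j = 2\zeta(2\alpha/\tau)\gamma_j^{1/\tau}$, after subtracting the empty-set term.

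There is essentially no new obstacle here: the only thing to be careful about is that $2\alpha/\tau > 1$ is needed for $\zeta(2\alpha/\tau)$ to be finite, which is exactly the constraint $\tau < 2\alpha$, and that the CBC bound as cited is stated for the plain Korobov space, so one must explicitly cite the error identity from Theorem~\ref{symLR} to transfer it. One should also note, as the text following the corollary does for the tent-transformed case, that for suitably decaying weights $\bsgamma$ the constant $C_{\alpha,\bsgamma,s,\tau}$ can be bounded independently of $s$, yielding tractability; but that is a remark rather than part of the proof. Hence the corollary follows immediately from Theorem~\ref{symLR} and the cited CBC results, and the proof is a one-line deduction once the substitution $\tau = 1/\lambda$ is made.
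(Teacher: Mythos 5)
Your proposal is correct and follows exactly the route the paper takes: the paper derives this corollary immediately from the error identity $e(\mathcal{H}(K^{\kor+\cos}_{\alpha,\bsgamma,s}); P_{\sym}(\bsg,N)) = e(\mathcal{H}(K^{\kor}_{\alpha,\bsgamma,s}); P(\bsg,N))$ established after Theorem~\ref{symLR}, combined with the standard (fast) CBC bounds for the weighted Korobov space from \cite{D04,kuo,NC06,NC06b}, including the same substitution $\lambda = 1/\tau$. Your observations about the constraint $\tau < 2\alpha$ ensuring $\zeta(2\alpha/\tau) < \infty$ and the product form of the constant are accurate and consistent with the paper.
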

Due to the fact that the number of points of $P_{\sym}(\bsg,N)$ is $M = O(2^{s-1} N)$ we do not get tractability results. Notice that in terms of the number of points one gets $(N-1)^{-\tau/2}  \approx 2^{(s-1)\tau/2} M^{-\tau/2}$, which also implies a strong dependence on the dimension.

A consequence of the symmetrization procedure is that
all functions of the form
\begin{align*}
  \sum_{\substack{ k_1,\ldots,k_s \in \NN \\
                   k_1,\ldots,k_s \text{ odd} }}
  b_{k_1,\ldots, k_s} \prod_{j=1}^s \cos(\pi k_j x_j)
   &&\text{for all }
   b_{k_1,\ldots, k_s} \in \RR
\end{align*}
are integrated exactly.
Likewise, all polynomials of the form
\begin{align*}
  \sum_{\substack{k_1,\ldots, k_s \in \NN \\
                  k_1,\ldots, k_s \text{ odd}}}
   a_{k_1,\ldots, k_s} \prod_{j=1}^s (x_j-1/2)^{k_j}
   &&\text{for all }
   a_{k_1,\ldots, k_s} \in \RR
\end{align*}
are integrated exactly.
This is because all the odd frequencies in a cosine series are integrated exactly.
Specifically for the half-period cosine space we can state the following result.
\begin{corollary}
The squared worst-case error for QMC integration in the half-period
cosine space $\mathcal{H}(K_{\alpha,\bsgamma,s}^{\cos})$ using a
symmetrized lattice rule is given by
$$e^2(\mathcal{H}(K_{\alpha,\bsgamma,s}^{\cos});
P_{\sym}(\bsg,N))= \sum_{\bsh \in L^\perp \setminus\{\bszero\}}
r_{\alpha,\bsgamma,s}(2 \bsh),$$ where $L^\perp := \{ \bsh \in
\ZZ^s: \bsh \cdot \bsg \equiv 0 \pmod{N} \}$ is the dual lattice.
\end{corollary}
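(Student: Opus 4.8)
The plan is to run the argument from the proof of Theorem~\ref{col:tentwce}, with the tent transformation $\phi$ replaced by the symmetrization operators $\sym_{\uu}$; equivalently, one may extract the computation from the proof of Theorem~\ref{symLR} by retaining only the term $\vv=\emptyset$, since a function in $\mathcal{H}(K^{\cos}_{\alpha,\bsgamma,s})$ is precisely one whose Korobov component vanishes. First I would fix $f\in\mathcal{H}(K^{\cos}_{\alpha,\bsgamma,s})$ with finite norm, expand it in its cosine series as in \eqref{eq_f_expansion}, and compute the action of the symmetrized rule $\frac{1}{2^sN}\sum_{n=0}^{N-1}\sum_{\uu\subseteq[s]}f(\sym_{\uu}(\{n\bsg/N\}))$ term by term.

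The key step is the elementary identity that $\cos(\pi k x)+\cos(\pi k(1-x))$ equals $2\cos(\pi k x)$ when $k$ is even and $0$ when $k$ is odd, already used in the proof of Theorem~\ref{symLR}. Summing over $\uu\subseteq[s]$ factorizes the cosine product coordinatewise into such binomials, so only the even multi-indices $\bsk=2\bsm$, $\bsm\in\NN_0^s$, survive, each giving a contribution $\widetilde f_{\cos}(2\bsm)\,(\sqrt2)^{|\bsm|_0}\prod_{j=1}^s 2\cos(2\pi m_j x_j)$, where I use $|2\bsm|_0=|\bsm|_0$. Evaluating at the lattice points and writing $2\cos(2\pi m_j ng_j/N)=\E^{2\pi\I m_j ng_j/N}+\E^{-2\pi\I m_j ng_j/N}$, I would expand the product into a sum of exponentials $\E^{2\pi\I n\bsh\cdot\bsg/N}$ over $\bsh\in\ZZ^s$ with $|\bsh|=\bsm$, the multiplicity $2^{s-|\bsm|_0}$ coming from the zero coordinates. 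The average over $n$ is the usual character sum on $\ZZ/N\ZZ$, equal to $1$ if $\bsh\in L^\perp$ and $0$ otherwise, so after collecting the powers of $2$ and $\sqrt2$ one is left with
\[
\frac{1}{2^sN}\sum_{n=0}^{N-1}\sum_{\uu\subseteq[s]}f\!\left(\sym_{\uu}\!\left(\left\{\tfrac{n\bsg}{N}\right\}\right)\right)-\int_{[0,1]^s}f(\bsx)\rd\bsx=\sum_{\bsh\in L^\perp\setminus\{\bszero\}}(\sqrt2)^{-|\bsh|_0}\,\widetilde f_{\cos}(2|\bsh|),
\]
the $\bsh=\bszero$ contribution reproducing $\widetilde f_{\cos}(\bszero)=\int f$.

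From here I would proceed as in Theorem~\ref{col:tentwce}: apply the Cauchy--Schwarz inequality to the right-hand side with weights $r_{\alpha,\bsgamma,s}(2\bsh)^{1/2}$, then bound the residual factor $\sum_{\bsh\in L^\perp\setminus\{\bszero\}}2^{-|\bsh|_0}|\widetilde f_{\cos}(2|\bsh|)|^2/r_{\alpha,\bsgamma,s}(2\bsh)$ by the full sum over $\bsh\in\ZZ^s\setminus\{\bszero\}$ and re-index $\bsh\mapsto|\bsh|$, so that (using $|2\bsm|_0=|\bsm|_0$) the $2^{-|\bsm|_0}$ cancels against the $2^{|\bsm|_0}$ sign patterns and the residual factor collapses to $\sum|\widetilde f_{\cos}(\bsk)|^2/r_{\alpha,\bsgamma,s}(\bsk)$ with $\bsk$ ranging over the nonzero even multi-indices, which is at most $\|f\|^2_{K^{\cos}_{\alpha,\bsgamma,s}}$. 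This yields $e^2(\mathcal{H}(K^{\cos}_{\alpha,\bsgamma,s});P_{\sym}(\bsg,N))\le\sum_{\bsh\in L^\perp\setminus\{\bszero\}}r_{\alpha,\bsgamma,s}(2\bsh)$; for the matching lower bound one exhibits the extremal function with cosine coefficients $\widetilde f_{\cos}(\bsk)=(\sqrt2)^{|\bsk|_0}r_{\alpha,\bsgamma,s}(\bsk)$ on $\bsk\in\{2|\bsh|:\bsh\in L^\perp\setminus\{\bszero\}\}$ and $0$ otherwise, for which the chain of inequalities becomes a chain of equalities.

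The step I expect to demand the most care is precisely this bookkeeping of the combinatorial factors $2^{\pm|\bsk|_0}$ generated by the folding $\bsh\mapsto|\bsh|$, both in the Cauchy--Schwarz estimate and in verifying that the extremal function attains the bound --- the same point that is handled somewhat tersely in the proofs of Theorems~\ref{col:tentwce} and~\ref{symLR}, and essentially the only place where the pure half-period cosine case differs (through the doubled frequencies $2\bsh$) from those two theorems.
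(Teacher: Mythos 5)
Your proposal is correct and follows essentially the same route as the paper, whose entire proof of this corollary is the single remark that it is ``similar to the proof of Theorem~\ref{symLR} with $\vv=\emptyset$''; you simply carry out that specialization explicitly (the error formula with doubled frequencies, Cauchy--Schwarz with weights $r_{\alpha,\bsgamma,s}(2\bsh)^{1/2}$, and the extremal function), which matches what the authors intend. The bookkeeping of the factors $2^{\pm|\bsh|_0}$ you flag is handled exactly as in Theorems~\ref{col:tentwce} and~\ref{symLR}, so no new idea is needed.
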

\begin{proof}
  Similar to the proof of Theorem~\ref{symLR} with $\vv = \emptyset$.
\end{proof}

\subsection{A lower bound on the worst-case error}\label{sec_lower_bound}

We prove the following lower bound for integration in the half-period cosine space. Let $P = \{\boldsymbol{x}_0,\ldots, \boldsymbol{x}_{N-1}\} \subseteq [0,1]^s$ be an $N$ element point set and let $\boldsymbol{w} = (w_0,\ldots, w_{N-1})$ be an arbitrary real tuple. Let
\begin{equation*}
e(\mathcal{H}(K^{\cos}_{\alpha,\bsgamma,s}); P; \boldsymbol{w}) = \sup_{\satop{f \in \mathcal{H}(K^{\cos}_{\alpha,\bsgamma,s})}{\|f\|_{K^{\cos}_{\alpha,\bsgamma,s}} \le 1}} \left|\int_{[0,1]^s} f(\boldsymbol{x}) \,\mathrm{d} \boldsymbol{x} - \sum_{n=0}^{N-1} w_n f(\boldsymbol{x}_n) \right|.
\end{equation*}

\begin{theorem} \label{thm_lower_bound_cosine}
For $P$ an arbitrary $N$-element point set in $[0,1]^s$ and $\boldsymbol{w} = (w_0,\dots, w_{N-1}) \in \mathbb{R}^N$ we have
\begin{equation*}
e(\mathcal{H}(K^{\cos}_{\alpha,\bsgamma,s}); P; \boldsymbol{w}) \ge C(\alpha,\bsgamma,s) \frac{(\log N)^{(s-1)/2}}{N^{\alpha}}
\end{equation*}
where $C(\alpha,\bsgamma,s) > 0$ depends on $\alpha, \bsgamma$ and $s$, but not on $N$ and $\boldsymbol{w}$.
\end{theorem}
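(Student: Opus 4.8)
The plan is to follow the classical Bakhvalov-type argument adapted to the half-period cosine space, reducing the general cubature lower bound (with arbitrary weights) to a bound on a carefully chosen trigonometric-type fooling function. First I would observe that it suffices to treat a single coordinate carefully and then tensorize: the one-dimensional ingredient is that among the functions $\sqrt{2}\cos(\pi k x)$ with $k$ ranging over an index set, there are many that the cubature rule cannot integrate well. More precisely, I would consider a finite ``hyperbolic cross'' type index set $\mathcal{A}_M \subseteq \NN_0^s$ of cardinality on the order of $M (\log M)^{s-1}$, consisting of multi-indices $\bsk$ with $\prod_{j: k_j \ne 0} k_j \le M$, and look for a function $f = \sum_{\bsk \in \mathcal{A}_M} c_{\bsk} (\sqrt{2})^{|\bsk|_0}\prod_{j=1}^s \cos(\pi k_j x_j)$ that vanishes at all $N$ points $\bsx_0,\ldots,\bsx_{N-1}$ but has $\int_{[0,1]^s} f = c_{\bszero} \ne 0$. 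Such an $f$ exists whenever $|\mathcal{A}_M| > N$, i.e. once $M (\log M)^{s-1} \gtrsim N$, because the condition ``$f(\bsx_n) = 0$ for all $n$'' together with ``$c_{\bszero}$ fixed'' is a homogeneous linear system with more unknowns than equations (one uses $N+1 \le |\mathcal{A}_M|$ and solves for the coefficients $(c_{\bsk})_{\bsk \ne \bszero}$ forcing also $c_{\bszero}=1$; rescale so that $c_{\bszero}$ is whatever normalization is convenient).

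Next I would control the norm of this fooling function. Since $\|f\|^2_{K^{\cos}_{\alpha,\bsgamma,s}} = \sum_{\bsk} |c_{\bsk}|^2 / r_{\alpha,\bsgamma,s}(\bsk)$ and $r_{\alpha,\bsgamma,s}(\bsk) = \prod_{j: k_j \ne 0} \gamma_j k_j^{-2\alpha} \ge c_{\bsgamma,s} M^{-2\alpha}$ for $\bsk \in \mathcal{A}_M$ (using $\prod k_j \le M$ and keeping track of the number of nonzero components via a crude bound), we get $\|f\|^2_{K^{\cos}_{\alpha,\bsgamma,s}} \le C_{\bsgamma,s} M^{2\alpha} \sum_{\bsk} |c_{\bsk}|^2$. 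To make the right-hand side small we want a fooling function whose coefficient vector has small $\ell_2$ norm relative to $|c_{\bszero}|$; the cleanest route is to normalize $\sum_{\bsk}|c_{\bsk}|^2 = 1$ and then bound $|c_{\bszero}|$ from below. For that I would use a dimension/rank argument: the orthogonal projection in $\mathbb{R}^{\mathcal{A}_M}$ onto the subspace $V = \{(c_{\bsk}) : \sum_{\bsk} c_{\bsk}(\sqrt 2)^{|\bsk|_0}\prod_j \cos(\pi k_j x_{n,j}) = 0,\ n=0,\ldots,N-1\}$ has codimension at most $N$, hence $V$ contains a unit vector whose $\bszero$-component is at least $\sqrt{1 - N/|\mathcal{A}_M|}$ (project the standard basis vector $e_{\bszero}$ onto $V$; its length is $\ge \sqrt{1 - N/|\mathcal{A}_M|}$ since $e_{\bszero}$ loses at most its components along an $N$-dimensional complement). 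Choosing $M$ so that $|\mathcal{A}_M| \ge 2N$ gives $|c_{\bszero}| \ge 1/\sqrt 2$ while still $M (\log M)^{s-1} \asymp N$, i.e. $M \asymp N / (\log N)^{s-1}$.

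Finally I would assemble the estimate. For this unit-normalized $f$ we have $\sum_{n} w_n f(\bsx_n) = 0$, so
\[
  e(\mathcal{H}(K^{\cos}_{\alpha,\bsgamma,s}); P; \boldsymbol w)
  \;\ge\; \frac{\left| \int_{[0,1]^s} f(\bsx)\rd\bsx - \sum_{n=0}^{N-1} w_n f(\bsx_n)\right|}{\|f\|_{K^{\cos}_{\alpha,\bsgamma,s}}}
  \;=\; \frac{|c_{\bszero}|}{\|f\|_{K^{\cos}_{\alpha,\bsgamma,s}}}
  \;\ge\; \frac{1/\sqrt 2}{C_{\bsgamma,s}^{1/2}\, M^{\alpha}}.
\]
Substituting $M \asymp N/(\log N)^{s-1}$ yields $e \ge C(\alpha,\bsgamma,s)\, (\log N)^{\alpha(s-1)} / N^{\alpha}$, which is in fact slightly stronger than the stated $(\log N)^{(s-1)/2}/N^\alpha$; one then weakens it to the claimed form (or one uses a smaller index set so that the constant is cleaner — either way the stated bound follows). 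The main obstacle I anticipate is the bookkeeping in the norm estimate: one must verify uniformly over $\bsk \in \mathcal{A}_M$ that $r_{\alpha,\bsgamma,s}(\bsk)$ is not too small, which requires handling the number of nonzero components of $\bsk$ (each contributing a factor $\gamma_j$ that may be less than one) together with the constraint $\prod_{k_j\ne 0} k_j \le M$; a convenient fix is to further restrict $\mathcal{A}_M$ to indices with $|\bsk|_0 \le s$ trivially but also to absorb the $\gamma_j$-factors into the $s$-dependent constant, noting $s$ is fixed. The counting estimate $|\{\bsk \in \NN_0^s : \prod_{k_j \ne 0} k_j \le M\}| \asymp M(\log M)^{s-1}$ is standard (it is the summatory behavior of the $s$-fold divisor-type function) and I would cite or quickly reprove it.
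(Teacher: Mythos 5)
Your approach is the classical ``fooling function by dimension counting'' route, but it has a genuine gap at the step where you lower-bound the mean value $c_{\bszero}$ of the fooling function. From the fact that the null space $V=\{(c_{\bsk})_{\bsk\in\mathcal{A}_M} : f(\bsx_n)=0 \mbox{ for all } n\}$ has codimension at most $N$ you conclude that the projection of $e_{\bszero}$ onto $V$ has squared length at least $1-N/|\mathcal{A}_M|$. That is false in general: a fixed unit vector can lie almost entirely in the $N$-dimensional complement $V^\perp=\spann\{v_n\}$, where $v_n=\bigl((\sqrt2)^{|\bsk|_0}\prod_j\cos(\pi k_j x_{j,n})\bigr)_{\bsk}$. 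What is true is only the trace identity $\sum_{\bsk}\|P_{V^\perp}e_{\bsk}\|^2=N$, i.e.\ the \emph{average} of $\|P_{V^\perp}e_{\bsk}\|^2$ over the basis is $N/|\mathcal{A}_M|$; for the particular vector $e_{\bszero}$ there is no such guarantee, and an adversarial point set could make every function in your span that vanishes at all nodes have (nearly) zero mean. Since the quadrature error of such an $f$ is exactly $|c_{\bszero}|$, the argument then yields nothing; the same objection already applies to your first formulation (``solve the homogeneous system forcing $c_{\bszero}=1$''), since all solutions might have $c_{\bszero}=0$. A telling symptom is that your conclusion $N^{-\alpha}(\log N)^{\alpha(s-1)}$ is too strong to be true: for $\alpha=1$ the cosine space coincides with the unanchored Sobolev space of smoothness one, where the rate $N^{-1}(\log N)^{(s-1)/2}$ is attainable, and for $s=2$ the tent-transformed Fibonacci lattice achieves $O(N^{-\alpha}(\log N)^{1/2})$ in the cosine space, which lies below your claimed lower bound for every $\alpha>1/2$.

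The paper avoids this trap by following Temlyakov's scheme rather than a rank argument. It takes nonnegative, compactly supported bumps $f_{\bsm}$ on boxes of volume $2^{-|\bsm|-2s}$ with $|\bsm|=t$ and $2N\le 2^t<4N$, forms the symmetrized translates $F_{\bsm}(\bsy)=2^{-s}\sum_{\uu\subseteq[s]}\sum_n w_n f_{\bsm}(\bsx_n \pmu \bsy)$, and uses a measure argument to show these vanish on a set $B_{\bsm}$ of measure greater than $1/2$. Integrating $\bigl(F_{\bsm}(\bsy)-\beta\,\widetilde f_{\bsm,\cos}(\bszero)\bigr)^2$ over $B_{\bsm}$ and using Parseval converts this into a lower bound on $\sum_{\bsk\ne\bszero}|\widetilde f_{\bsm,\cos}(\bsk)|^2\bigl(\sum_n w_n\prod_j\cos(\pi k_j x_{j,n})\bigr)^2$, which is then compared with $r_{\alpha,\bsgamma,s}(\bsk)$ after summing over the $\binom{t+s-1}{s-1}\asymp(\log N)^{s-1}$ vectors $\bsm$ with $|\bsm|=t$; taking square roots gives the exponent $(s-1)/2$. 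The normalization $\beta=\sum_n w_n$ is handled by the elementary inequality $(1-\beta)^2+A\beta^2\ge\min(1,A)/2$. If you wish to stay closer to your construction, the standard repair is Bakhvalov's: take $f=g^2$ with $g$ a trigonometric fooling function vanishing at all nodes, so that $\int f=\|g\|_{L_2}^2>0$ automatically; but bounding $\|g^2\|_{K^{\cos}_{\alpha,\bsgamma,s}}$ then requires additional (Nikolskii-type) inequalities and typically does not recover the logarithmic factor.
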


\begin{proof}
We follow the proof of Temlyakov~\cite[Lemma~3.1]{tem}.
Let $\beta := \sum_{n=0}^{N-1} w_n$.
If $\beta = 0$ then $e(\mathcal{H}(K^{\cos}_{\alpha,\bsgamma,s}); P; \boldsymbol{w}) \ge 1$, since for $f = 1$ the integration error is $1$.
In this case the result holds trivially. Thus we can assume now that $\beta \neq 0$.

We have
\begin{eqnarray*}
  \lefteqn{ e^2(\mathcal{H}(K^{\cos}_{\alpha,\bsgamma,s}); P; \boldsymbol{w}) }
  \\
  & = &
  \int_{[0,1]^{2s}} K(\boldsymbol{x}, \boldsymbol{y}) \rd{\bsx} \rd{\bsy}
  - 2 \sum_{n=0}^{N-1} w_n \int_{[0,1]^s} K(\boldsymbol{x},\boldsymbol{x}_n) \rd{\bsx}
  + \sum_{n,n'=0}^{N-1} w_n w_{n'} K(\bsx_n,\bsx_{n'})
  \\
  & = &
  (1-\beta)^2
  + \sum_{\bsk \in \mathbb{N}_0^s \setminus \{\bszero\}} r_{\alpha,\bsgamma,s}(\bsk) \, 2^{|\bsk|_0} \left( \sum_{n=0}^{N-1} w_n \prod_{j=1}^s \cos(\pi k_j x_{j,n}) \right)^2
\end{eqnarray*}
where $\bsx_n = (x_{1,n},\ldots, x_{s,n})$.

For $\boldsymbol{m} = (m_1,\ldots, m_s) \in \mathbb{N}_0^s$ and $|\bsm| := m_1 + \cdots + m_s$
we will now construct a function $G(\bsy) := \sum_{|\bsm| = t} F_{\bsm}(\bsy)$, parametrized by the points $\bsx_n$ and weights $w_n$ of the arbitrary cubature rule, to obtain a lower bound on the worst-case error.
For this we will pick its cosine coefficients to be bounded above by $r_{\alpha,\bsgamma,s}(\bsk)$.
Let the integer $t$ be chosen such that
$$
  2N \le 2^t < 4N.
$$

Let $a := \lceil \alpha \rceil + 1$ and let $f:\mathbb{R} \to \mathbb{R}$ be the $a$-times differentiable function
\begin{equation}\label{def_f_lower_bound}
  f(x)
  :=
  \begin{cases}
    x^{a+1} (1-x)^{a+1} & \mbox{for } 0 < x < 1,  \\
    0 & \mbox{otherwise}.
  \end{cases}
\end{equation}
Note that $f(x) > 0$ for $0 < x < 1$ and $\supp(f^{(\tau)}) = (0, 1)$ for all $0 \le \tau \le a$.

For $m \in \NN_0$ let $f_m(x) := f(2^{m+2}x)$ and for $\boldsymbol{m} = (m_1,\ldots, m_s) \in \mathbb{N}_0^s$ and $\bsx = (x_1,\ldots, x_s) \in \RR^s$ let
\begin{equation*}
  f_{\bsm}(\bsx) := \prod_{j=1}^s f_{m_j}(x_j).
\end{equation*}
Then $\supp(f_{\bsm}) = \prod_{j=1}^s (0, 2^{-m_j-2})$.
We obtain
\begin{equation*}
  \widetilde{f}_{\bsm, \cos}(\bszero) = \prod_{j=1}^s \int_0^1 f(2^{m_j+2} x) \,\mathrm{d} x = \prod_{j=1}^s \frac{1}{2^{m_j+2}} \int_0^1 f(y) \,\mathrm{d} y = \frac{1}{2^{|\bsm| + 2s}} \left(I(f)\right)^s,
\end{equation*}
where $I(f) := \int_0^1 f(y) \,\mathrm{d} y$. For $f$ given by \eqref{def_f_lower_bound} we obtain $$I(f) = B(a+2,a+2) = \frac{((a+1)!)^2}{(2a+3)!},$$ where $B$ denotes the beta function.

For $k \not=0$ we have
\begin{eqnarray*}
  \widetilde{f}_{m,\cos}(k)
  & = &
  \int_{0}^1 f(2^{m+2} x) \sqrt{2} \cos(\pi k x) \,\mathrm{d} x
  \\
  & = &
  \frac{1}{\sqrt{2}} \int_{0}^{2^{-m-2}} f(2^{m+2} x) \left( \mathrm{e}^{\pi \mathrm{i} k x} + \mathrm{e}^{-\pi \mathrm{i} k x}\right) \, \mathrm{d} x
  \\
  & = &
  \frac{1}{2^{m+2} \sqrt{2}} \int_{0}^1 f(y) \left( \mathrm{e}^{2\pi \mathrm{i} k 2^{-m-3} y} + \mathrm{e}^{-2\pi \mathrm{i} k 2^{-m-3} y} \right) \,\mathrm{d} y
  \\
  & = &
  \frac{\widehat{f}(k 2^{-m-3}) + \widehat{f}(-k 2^{-m-3})}{2^{m+2} \sqrt{2}},
\end{eqnarray*}
where
$$
  \widehat{f}(h)
  =
  \int_0^1 f(x) \, \mathrm{e}^{-2\pi \mathrm{i} h x} \,\mathrm{d} x
$$
denotes the Fourier transform of $f$. Since, by definition, $f^{(\tau)}(0) = f^{(\tau)}(1) = 0$ for all $0 \le \tau \le a$, and $f$ is $a$-times differentiable, repeated integration by parts shows that for any $m \in \mathbb{N}_0$ we have
\begin{equation*}
|\widehat{f}(k 2^{-m-3})|  \le C_a \min(1, (k 2^{-m-3})^{-a}),
\end{equation*}
where the constant $C_a > 0$ depends only on $a$ (and $f$). Thus we have
\begin{eqnarray*}
|\widetilde{f}_{m,\cos}(k)| & \le & C_a \, 2^{-m-3/2} \min(1, (k 2^{-m-3})^{-a})\\
& \le & C'_a \, 2^{-m} \min(1,2^{a m} r_{a/2,1}(k)).
\end{eqnarray*}
This bound even holds for $\widetilde{f}_{m,\cos}(0)$ if $C'_a$ is large enough.
For the multivariate case we have the bound
\begin{eqnarray*}
  |\widetilde{f}_{\bsm,\cos}(\bsk)|
  & \le &
  C(a,s) \prod_{j=1}^s 2^{-m_j} \min(1, 2^{a m_j} r_{a/2,1}(k_j))
  \\
  & = &
  C(a,s) \, 2^{(\alpha - 1) |\bsm|} \prod_{j=1}^s 2^{-\alpha m_j} \min(1, 2^{a m_j} r_{a/2,1}(k_j))
  .
\end{eqnarray*}
By summing $|\widetilde{f}_{\bsm,\cos}(\bsk)|^2$ over all choices of $\bsm$ where $|\bsm| = t$ we obtain
\begin{eqnarray*}
  \sum_{\bsm \in \NN_0^s \atop |\bsm| = t} |\widetilde{f}_{\bsm,\cos}(\bsk)|^2
  & \le &
  2^{2(\alpha-1) t} C^2(a,s)  \sum_{\bsm\in \NN_0^s \atop |\bsm| = t}  \prod_{j=1}^s 2^{-2\alpha m_j} \min(1, 2^{2a m_j} r_{a,1}(k_j))
  \\
  & \le &
  2^{2(\alpha-1) t} C^2(a,s) \prod_{j=1}^s \sum_{m=0}^\infty 2^{-2\alpha m} \min(1,2^{2a m} r_{a,1}(k_j))
  .
\end{eqnarray*}
The last sum can now be estimated by
\begin{eqnarray*}
  \lefteqn{
    \sum_{m=0}^\infty 2^{-2\alpha m} \min(1,2^{2a m} r_{a,1}(k_j))
  }
  \\
  & = &
  \sum_{0 \le m \le (\log_2 r^{-1}_{a,1}(k_j))/2a} 2^{2(a-\alpha) m} r_{a,1}(k_j)+ \sum_{m > (\log_2 r^{-1}_{a,1}(k_j))/2a} 2^{-2\alpha m}
  \\
  & \le &
  \frac{r^{-1}_{a-\alpha,1}(k_j) 2^{2(a-\alpha)}-1}{2^{2(a-\alpha)}-1} r_{a,1}(k_j)  + \frac{r_{\alpha,1}(k_j) 2^{2\alpha}}{2^{2\alpha}-1}
  \\
  & \le &
  r_{\alpha,1}(k_j) \left(1 + \frac{2^{2\alpha}}{2^{2\alpha}-1}\right)
  \\
  &\le &
  3 \, r_{\alpha,1}(k_j).
\end{eqnarray*}
Thus, since $2N \le 2^t < 4N$, we have
\begin{equation*}
  r_{\alpha,\bsgamma,s}(\bsk)
  \ge
  C_0(a,\bsgamma,s) 2^{-2(\alpha - 1) t} \sum_{\bsm \in \NN_0^s \atop |\bsm| = t} |\widetilde{f}_{\bsm,\cos}(\bsk)|^2
  \ge
  C_1(a,\bsgamma,s) \frac{2^{2t}}{N^{2\alpha}} \sum_{\bsm \in \NN_0^s \atop |\bsm| = t} |\widetilde{f}_{\bsm,\cos}(\bsk)|^2
  .
\end{equation*}

Now for $\bsx =(x_1,\ldots,x_s)$ and  $\bsy=(y_1,\ldots,y_s)$ and for $\uu \subseteq [s]$ define
$$
  \bsx \pmu \bsy
  =
  (z_1,\ldots,z_s),
$$
where $z_j=x_j+y_j$ if $j \in \uu$ and $z_j=x_j-y_j$ if $j \not\in \uu$.
Define the functions
\begin{align*}
  F_{\bsm,\uu}(\bsy)
  &:=
  \sum_{n=0}^{N-1} w_n f_{\bsm}(\bsx_n \pmu \bsy)
  ,
  &
  F_{\bsm}(\bsy)
  &:=
  \frac{1}{2^s} \sum_{\uu \subseteq [s]} F_{\bsm,\uu}(\bsy)
  ,
\end{align*}
and the sets
\begin{align*}
  B_{\bsm,\uu}
  &:=
  \left\{ \bsy \in [0,1]^s: F_{\bsm,\uu}(\bsy) = 0 \right\}
  ,
  \\
  B_{\bsm}
  &:=
  \left\{ \bsy \in [0,1]^s: F_{\bsm,\uu}(\bsy) = 0  \mbox{ for all } \uu \subseteq [s] \right\}
  =
  \bigcap_{\uu \subseteq [s]} B_{\bsm,\uu}
  .
\end{align*}
Denote with $B_{\bsm,\uu}^{c}$ the complement with respect to $[0,1]^s$.
Then for $\lambda_s$ the $s$-dimensional Lebesgue measure we have $\lambda_s(\supp(F_{\bsm,\uu})) = \lambda_s(B_{\bsm,\uu}^c)$.
Since $\supp(f_{\bsm}(\bsx_n \pmu \bsy))$ as a function of $\bsy$ is contained in the interval $\prod_{j \in \uu}(-x_{j,n},-x_{j,n}+2^{-m_j-2}) \prod_{j \in [s] \setminus \uu} (x_{j,n} - 2^{-m_j-2}, x_{j,n})$ we have
$$
  \supp(F_{\bsm,\uu})
  \subseteq
  \bigcup_{n=0}^{N-1}
    \prod_{j \in \uu}(-x_{j,n},-x_{j,n}+2^{-m_j-2})
    \prod_{j \in [s]\setminus\uu} (x_{j,n} - 2^{-m_j-2}, x_{j,n})
  .
$$
Thus $\lambda_s(\supp(F_{\bsm,\uu})) = \lambda_s(B_{\bsm,\uu}^c) \le N 2^{-|\bsm|-2s}$.
Now, for all $\bsm$ satisfying $|\bsm|=t$ we obtain
$$
  \lambda_s(B_{\bsm})
  =
  1 - \lambda_s(B_{\bsm}^c)
  =
  1 - \lambda_s\left( \bigcup_{\uu \subseteq [s]} B_{\bsm,\uu}^c \right)
  \ge
  1 - \sum_{\uu \subseteq [s]} N 2^{-|\bsm|-2s}
  =
  1 - \frac{N}{2^{|\bsm|+s}}
  >
  1/2,
$$
since $2 N \le 2^t < 4N$.

We can expand $F_{\bsm}(\bsy) - \int_{[0,1]^s} F_{\bsm}(\bsy) \rd{\bsy}$ in terms of the coefficients $\widetilde{f}_{\bsm,\cos}(\bsk)$:
\begin{eqnarray*}
  \lefteqn{\frac{1}{2^s} \sum_{\uu \subseteq [s]}\sum_{n=0}^{N-1} w_n f_{\bsm}(\bsx_n \pmu \bsy) - \widetilde{f}_{\bsm,\cos}(\bszero) \beta}
  \\
  & = &
  \sum_{n=0}^{N-1} w_n \sum_{\bsk \in \NN_0^s \setminus\{\bszero\}} \widetilde{f}_{\bsm,\cos}(\bsk) \frac{(\sqrt{2})^{|\bsk|_0}}{2^s} \sum_{\uu \subseteq [s]} \prod_{j\in \uu} \cos(\pi k_j (x_{j,n} + y_j)) \prod_{j\in [s]\setminus \uu} \cos(\pi k_j (x_{j,n} - y_j))
  \\
  & = &
  \sum_{n=0}^{N-1} w_n \sum_{\bsk \in \NN_0^s \setminus\{\bszero\}} \widetilde{f}_{\bsm,\cos}(\bsk) (\sqrt{2})^{|\bsk|_0} \prod_{j=1}^s \frac{\cos(\pi k_j(x_{j,n}-y_j))+\cos(\pi k_j (x_{j,n}+y_j))}{2}
  \\
  & = &
  \sum_{n=0}^{N-1} w_n \sum_{\bsk \in \NN_0^s \setminus\{\bszero\}} \widetilde{f}_{\bsm,\cos}(\bsk) (\sqrt{2})^{|\bsk|_0} \prod_{j=1}^s (\cos(\pi k_j x_{j,n}) \cos(\pi k_j y_j))
  \\
  & = & \sum_{\bsk \in \NN_0^s \setminus\{\bszero\}} \widetilde{f}_{\bsm,\cos}(\bsk) (\sqrt{2})^{|\bsk|_0} \left(\sum_{n=0}^{N-1} w_n \prod_{j=1}^s \cos(\pi k_j x_{j,n})\right) \prod_{j=1}^s \cos(\pi k_j y_j)
.
\end{eqnarray*}
Thus, by definition of $B_{\bsm}$, we have
\begin{eqnarray*}
  \lambda_s(B_{\bsm}) |\widetilde{f}_{\bsm,\cos}(\bszero)|^2 \beta^2
  & = &
  \int_{B_{\bsm}} \left( \frac{1}{2^s} \sum_{\uu \subseteq [s]}\sum_{n=0}^{N-1} w_n f_{\bsm}(\bsx_n \pmu \bsy) - \widetilde{f}_{\bsm,\cos}(\bszero) \beta \right)^2 \,\mathrm{d} \bsy \\
& \le & \int_{[0,1]^s} \left( \frac{1}{2^s} \sum_{\uu \subseteq [s]}\sum_{n=0}^{N-1} w_n f_{\bsm}(\bsx_n \pmu \bsy) - \widetilde{f}_{\bsm,\cos}(\bszero) \beta \right)^2 \,\mathrm{d} \bsy \\
& = & \sum_{\bsk \in \mathbb{N}_0^s \setminus \{\bszero\}} |\widetilde{f}_{\bsm,\cos}(\bsk)|^2 \,  \left( \sum_{n=0}^{N-1} w_n \prod_{j=1}^s \cos(\pi k_j x_{j,n}) \right)^2.
\end{eqnarray*}

We are now ready to piece this all together to obtain
\begin{eqnarray*}
\lefteqn{
 e^2(\mathcal{H}(K^{\cos}_{\alpha,\bsgamma,s}); P; \bsw)
 =
 (1-\beta)^2 + \sum_{\bsk \in \mathbb{N}_0^s \setminus \{\bszero\}} r_{\alpha,\bsgamma,s}(\bsk) \, 2^{|\bsk|_0} \left(\sum_{n=0}^{N-1} w_n \prod_{j=1}^s \cos(\pi k_j x_{j,n}) \right)^2
 }
 \\
 & \ge &
 (1-\beta)^2  + C_1(a,\bsgamma,s) \frac{2^{2t}}{N^{2\alpha}}  \sum_{\bsm \in \NN_0^s \atop |\bsm| = t} \sum_{\bsk \in \mathbb{N}_0^s \setminus \{\bszero\}} |\widetilde{f}_{\bsm,\cos}(\bsk)|^2 \, 2^{|\bsk|_0} \left(\sum_{n=0}^{N-1} w_n \prod_{j=1}^s \cos(\pi k_j x_{j,n}) \right)^2
 \\
 & \ge &
 (1-\beta)^2 +  C_1(a,\bsgamma,s) \frac{2^{2t}}{N^{2\alpha}} \sum_{\bsm \in \NN_0^s \atop |\bsm| = t} \lambda_s(B_{\bsm}) |\widetilde{f}_{\bsm,\cos}(\bszero)|^2 \beta^2
 \\
 & \ge &
 (1-\beta)^2 +  C_2(a,\bsgamma,s) \beta^2  \left(I(f)\right)^{2s} \frac{2^{2 t}}{N^{2\alpha}} 2^{-2t-4s} \sum_{\bsm \in \NN_0^s \atop |\bsm| = t} 1
 \\
 & \ge &
 (1-\beta)^2 +  C_3(\alpha,\bsgamma,s) \beta^2 N^{-2\alpha} {t+s-1 \choose s-1}.
\end{eqnarray*}
Set $A := C_3(a,\bsgamma,s) N^{-2\alpha} {t+s-1 \choose s-1}$. Then the last expression can be written as $(1-\beta)^2 + A \beta^2$, which satisfies
\begin{equation*}
e^2(\mathcal{H}(K^{\cos}_{\alpha,\bsgamma,s}); P; \bsw)
\ge
(1-\beta)^2 + A \beta^2 \ge \frac{\min(1,A)}{2} \ge C_4(\alpha,\bsgamma,s) N^{-2\alpha} {t+s-1 \choose s-1},
\end{equation*}
which implies the result, since $t \ge \log_2(N)$.
\end{proof}

\section{Numerical results}\label{sec_num_res}

In this section we show some numerical examples of applying lattice rules, tent-transformed lattice rules and symmetrized lattice rules to some test functions.
For this we use the lattice \emph{sequence} from \cite{HKKN2011} which was constructed to give $3$rd order convergence in a Korobov space.
It is a $10$-dimensional base~$2$ sequence with a maximum of $2^{20}$~points and is comprised of  embedded lattice rules with sizes $2^m$ for $m=0,\ldots,20$.
This lattice sequence was also used in \cite{NC2010} for some higher order convergence tests.

We report on two test functions to show some effects:
\begin{align*}
  g_{s,w}(\bsx)
  &:=
  \prod_{j=1}^s \left( 1 + \frac{w^j}{21} \left(-10 + 42 x_j^2 - 42 x_j^5 + 21 x_j^6\right) \right)
  ,
  \\
  h_{s,w}(\bsx)
  &:=
  \prod_{j=1}^s \left( 1 + \frac{w^j}{8} \left(31-84 x_j^2+8 x_j^3+70 x_j^4-28 x_j^6+8 x_j^7-16 \cos(1)-16 \sin(x_j)\right) \right)
  .
\end{align*}
Both functions integrate to $1$ over $[0,1]^s$.
The parameter $w$ acts like a product weight $w^j$.

All tests use $2^{20}$ (plus~$1$ for the symmetrized rule) function evaluations for their final result.
In Figure~\ref{fig:f1-f4} we report the actual running time in microseconds of optimized \Cpp\ code (to accommodate for the difference in which integration nodes are constructed).
Every mark represents an approximation with $2^m$ function evaluations or $2^{s-1}2^{m'} + 1$ for the symmetrized rule.
From this it can be seen that all three methods approximately have the same cost in this implementation, but in general this is dependent on the relative differences in the time for generating lattice sequence points, symmetrization and function evaluation.
For completeness we note that the tests were run on a 1.8~GHz Intel Core~i7 and compiled with the clang++~3.1 \Cpp\ compiler.
Tests on an older 2~GHz Intel Xeon compiled with the g++~4.8 compiler gave similar looking results (but slower).
The \Cpp\ source code and the raw data (for more test functions than shown here) can be downloaded from the KU~Leuven Lirias document repository.

The function $g_{s,w}(\bsx)$ is a product of $1 + w^j (B_6(x_j) + E_5(x_j))$, where $B_6$ is the degree~$6$ Bernoulli polynomial and $E_5$ is the degree~$5$ Euler polynomial.
From their Fourier expansions \cite{AS64} follows that we expect $3$rd order convergence for both the tent-transformed and symmetrized lattice sequence, while we only expect $1$st order convergence for the standard lattice sequence.
We show some results in the left hand column of Figure~\ref{fig:f1-f4} for $s=8$.
When the weight parameter is close to~$1$ ($w=0.9$ in the middle panel) we notice that the performance of the symmetrized and the tent-transformed rules is similar.
If $w$ is much smaller ($w=0.1$ in the top panel) then the tent-transformed rule wins, while if $w$ is larger ($w=2$ in the bottom panel) then the symmetrized rule wins; these effects are more pronounced when the dimension gets larger.

\begin{figure}
  \centering
  \includegraphics[width=0.47\textwidth]{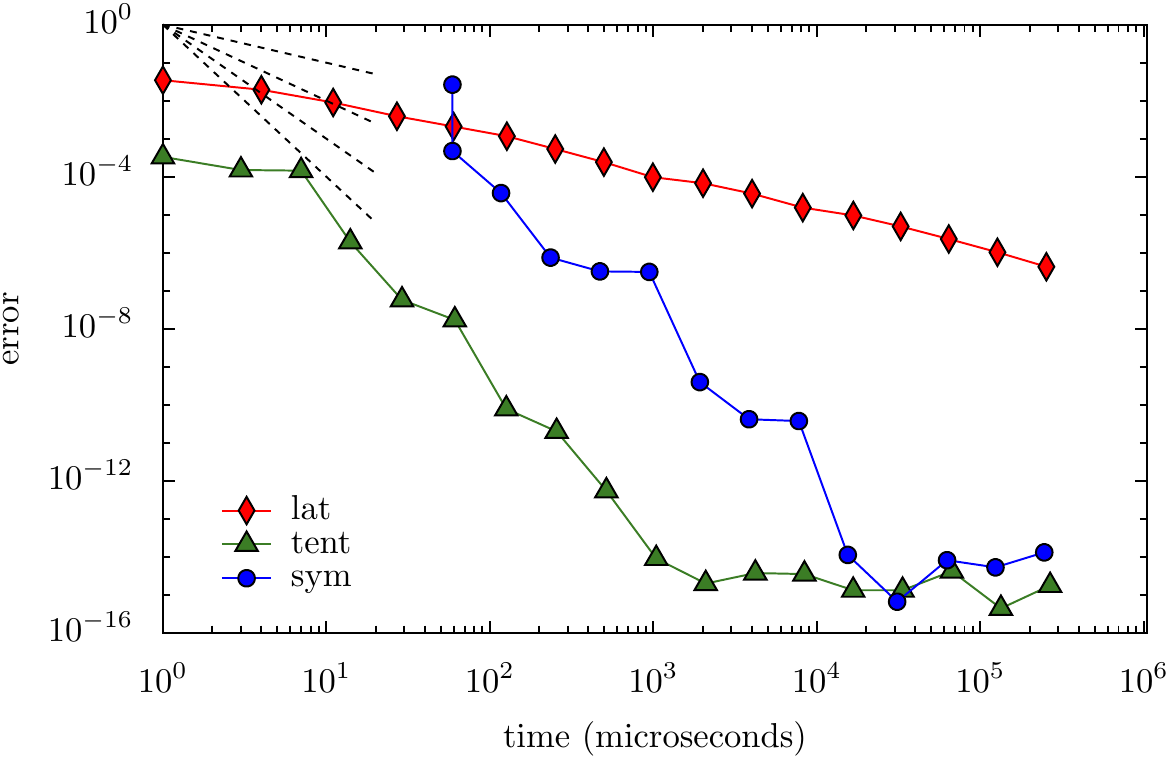}
  \includegraphics[width=0.47\textwidth]{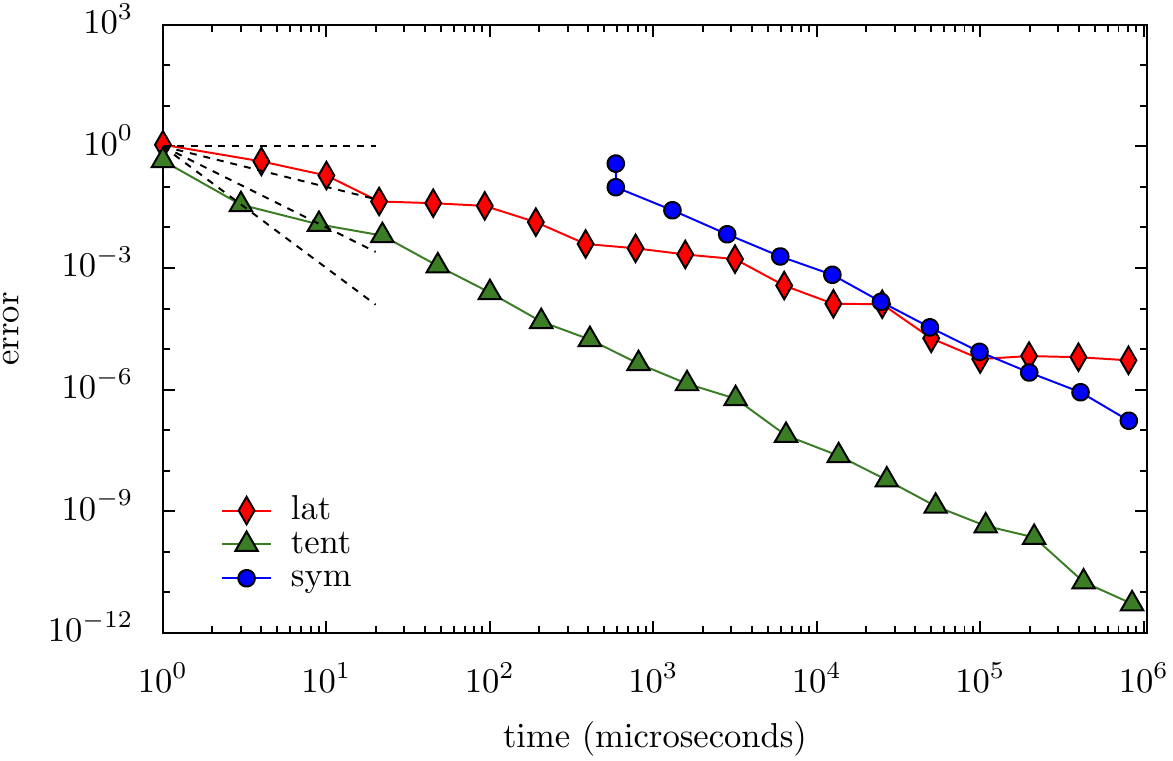}
  \\[2mm]
  \includegraphics[width=0.47\textwidth]{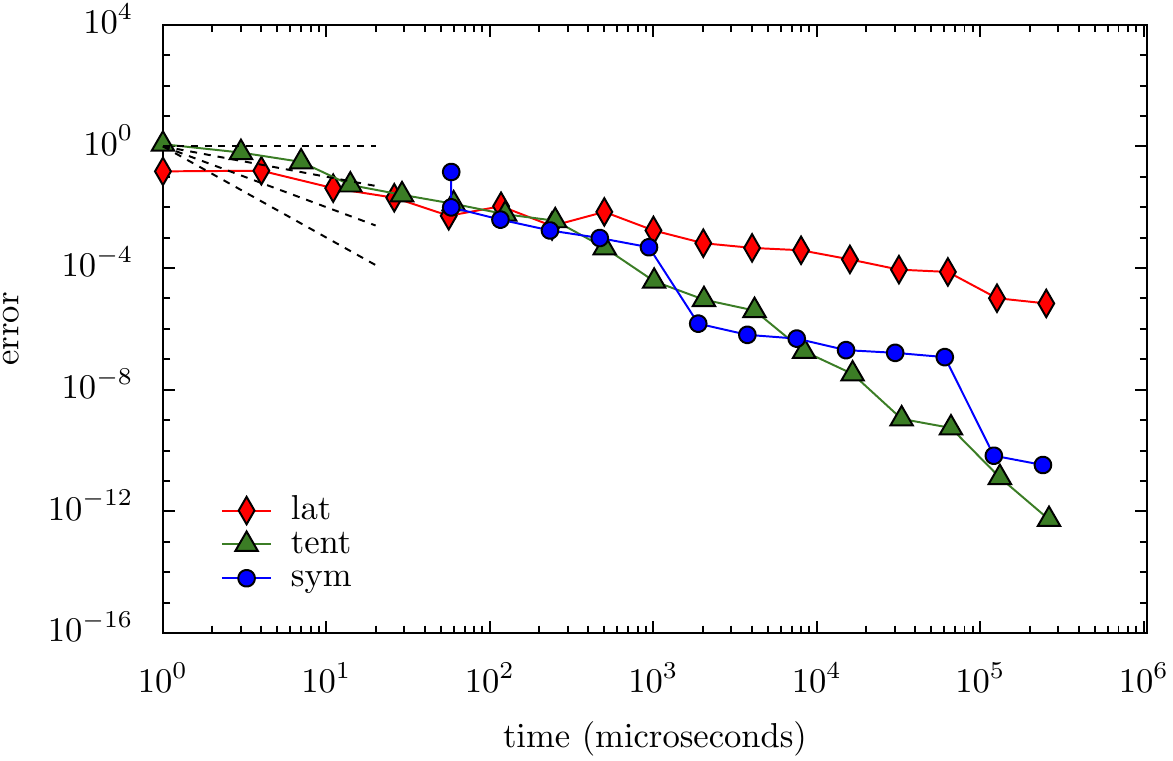}
  \includegraphics[width=0.47\textwidth]{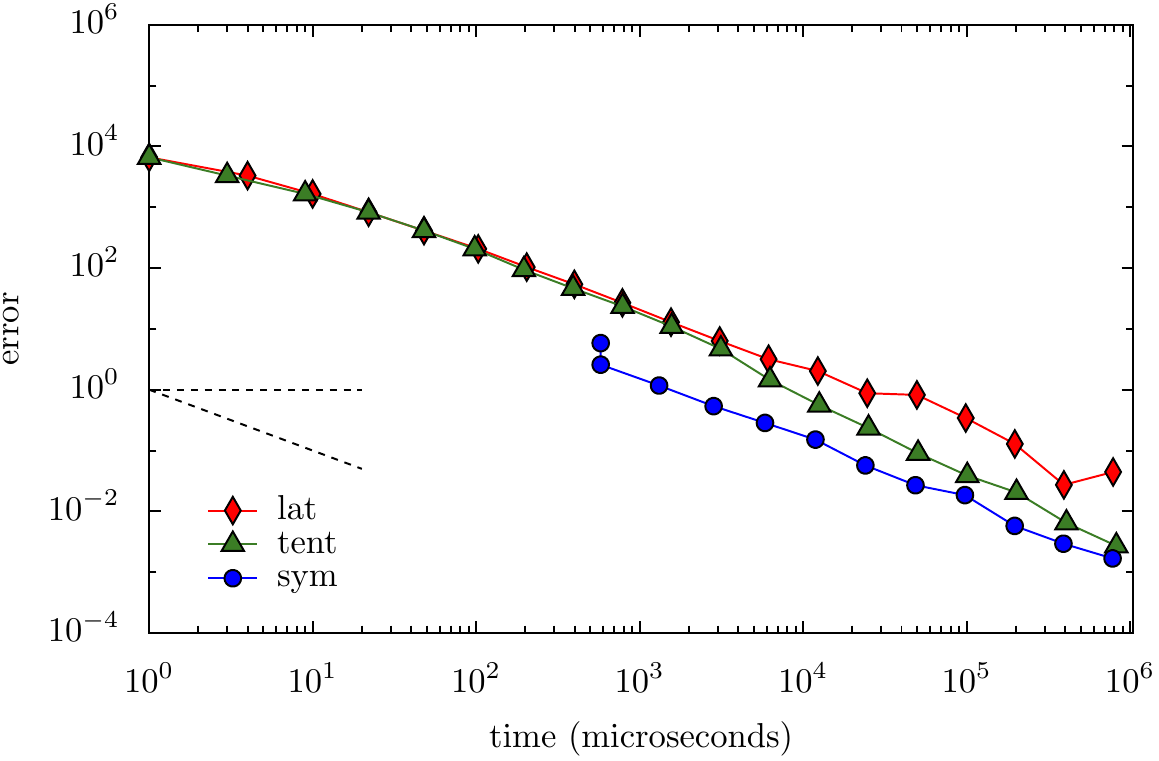}
  \\[2mm]
  \includegraphics[width=0.47\textwidth]{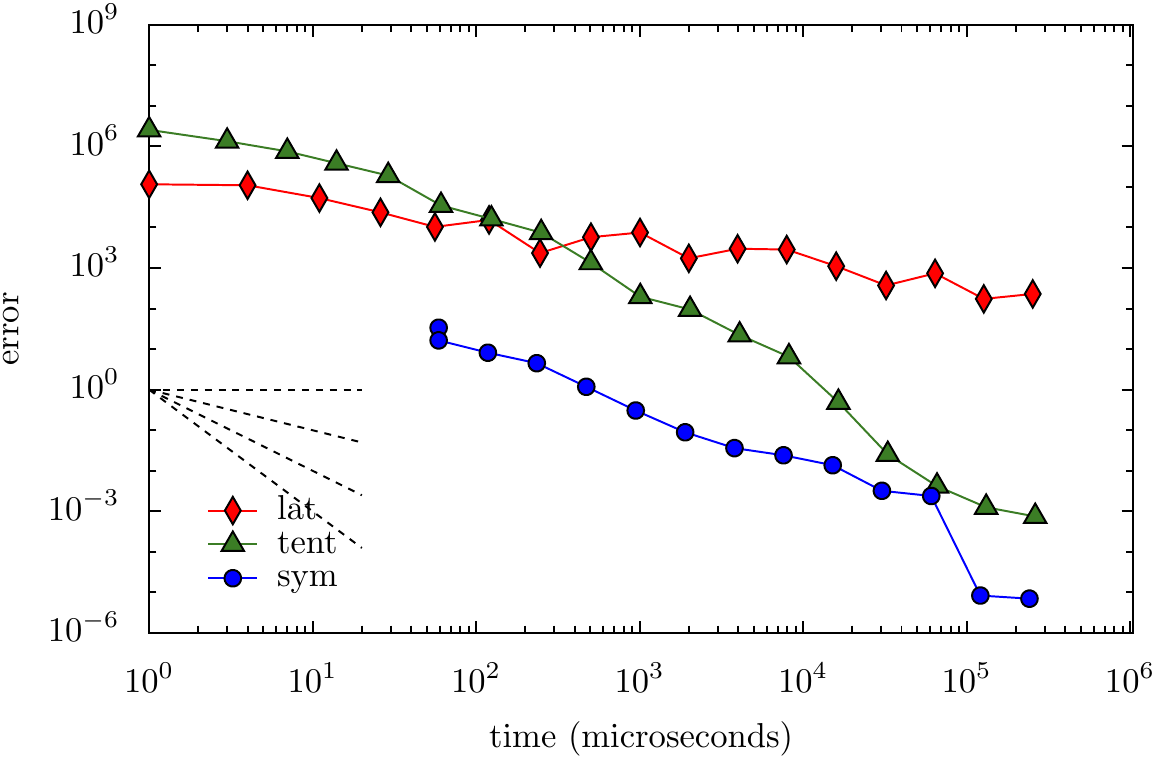}
  \includegraphics[width=0.47\textwidth]{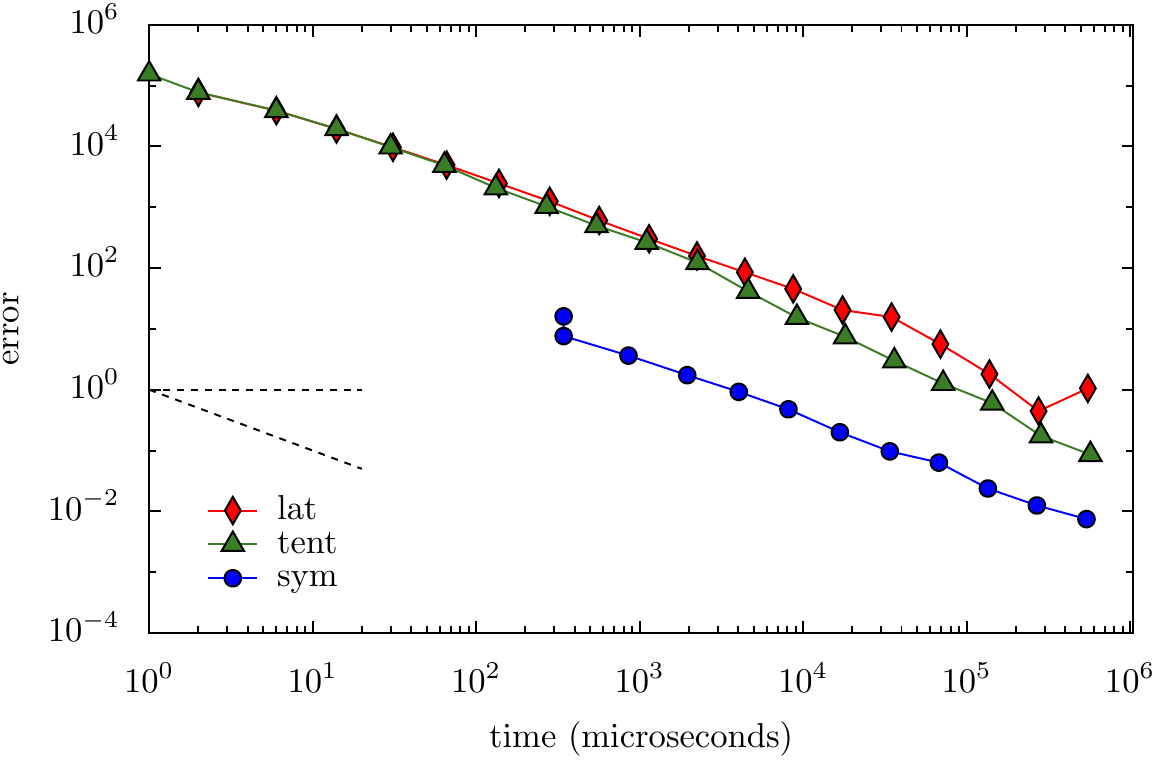}
  \caption{
    Left column: function $g_{s,w}(\bsx)$ in $8$~dimensions for $w=0.1$, $w=0.9$ and $w=2$.
    Right column: function $h_{s,w}(\bsx)$ in $10$~dimensions for $w=0.1$, $w=0.9$ and $w=1$.
  }\label{fig:f1-f4}
\end{figure}

The function $h_{s,w}(\bsx)$ is a more arbitrary function (including $\sin(x_j)$ and $E_7(x_j)$) for which its cosine expansion converges only like $k^{-2}$.
We thus expect $1$st order convergence for the tent-transformed and symmetrized methods.
For $s=10$ the behavior is illustrated in the right hand column of Figure~\ref{fig:f1-f4}.
Here again it can be seen that small values of $w$ give the advantage to the tent-transformed lattice rule, see the top panel where $w=0.1$.
However, already for $w=0.9$ the symmetrized rule outperforms both other methods.
The bottom panel shows the case $w=1$ where it can be seen that the number of points needed to reduce the error below~$1$ is already getting quite high.
In such a case there is an exponential dependence on the number of dimensions and the symmetrized rule is then the preferred choice if the dimension is small enough (this can also be seen in the left bottom panel which has $w=2$ for $g_{s,w}(\bsx)$ in $s=8$ dimensions). We remark that the graphs, where one replaces the time in the abscissa by the number of points $N$ in Figure~\ref{fig:f1-f4}, are very similar to the ones shown in Figure~\ref{fig:f1-f4}.

\section{Conclusion}\label{sec_conclusion}

In this paper we have shown that it is possible to obtain a
convergence rate of $N^{-\alpha + \delta}$ for any $\delta
> 0$ for sufficiently smooth integrands for lattice-type rules also for nonperiodic
functions. Previously, the only QMC rules with such
properties where higher order digital nets \cite{D08}. However, the function spaces we consider here are smaller than the usual smooth Sobolev spaces used for higher order digital nets.

Since for smoothness $1$ the unanchored Sobolev space and the half-period cosine space coincide, we obtain that tent-transformed lattice rules achieve the same convergence behavior and tractability results as lattice rules in Korobov spaces.
In contrast to previous results this technique does not need randomization.

\section{Acknowledgements}

J.D.\ is supported by an Australian Research Council Queen Elizabeth II fellowship.
D.N.\ is a fellow of the Research Foundation Flanders (FWO) and thanks Prof.\ Ian H.\ Sloan for initial discussions on the half-period cosine space.
The first two authors are grateful to the Hausdorff Institute in Bonn where most of this research was carried out.
F.P.\ is partially supported by the Austrian Science Foundation (FWF), Project S9609.


\bigskip

\begin{small}
\noindent\textbf{Authors' addresses:}
\\ \\
\noindent Josef Dick,
\\
School of Mathematics and Statistics,
University of New South Wales, Sydney, NSW, 2052, Australia\\
\\
\noindent Dirk Nuyens,
\\
Department of Computer Science,
KU Leuven, Celestijnenlaan 200A, 3001 Heverlee, Belgium\\
\\
\noindent Friedrich Pillichshammer,
\\
Institut f\"{u}r Finanzmathematik,
Universit\"{a}t Linz, Altenbergerstr.~69, 4040 Linz, Austria\\
\\

\noindent \textbf{E-mail:} \\
\texttt{josef.dick@unsw.edu.au}\\
\texttt{dirk.nuyens@cs.kuleuven.be}\\
\texttt{friedrich.pillichshammer@jku.at}
\end{small}

\end{document}